\newtheorem{thm}{Theorem}[section]
\newtheorem{lem}[thm]{Lemma}
\newtheorem*{conjs}{Conjectures}
\newtheorem{prop}[thm]{Proposition}
\theoremstyle{remark}
\theoremstyle{definition}
\numberwithin{equation}{section}
\def\Z {{\mathbb Z}}
\def\T {{\tilde T}}
\def\H {{\mathcal H}}
\def\l {\left\langle}
\def\r {\right\rangle}
\begin{document}

\title[monomial bases]{Monomial bases for the centres of the group algebra and Iwahori--Hecke algebra of $S_4$}
\author{Andrew Francis}
\address{School of Computing and Mathematics, University of Western Sydney, NSW 1797, Australia}
\email[Andrew~Francis]{a.francis@uws.edu.au}
\author{Lenny Jones}
\address{Department of Mathematics, Shippensburg University, Pennsylvania, USA}
\email[Lenny~Jones]{lkjone@ship.edu}
\date{\today}
\begin{abstract}
G.~E.~Murphy showed in 1983 that the centre of every symmetric group algebra has an integral basis consisting of a specific set of monomial symmetric polynomials in the Jucys--Murphy elements.   While we have shown in earlier work that the centre of the group algebra of $S_3$ has exactly three additional such bases, we show in this paper that the centre of the group algebra of $S_4$ has infinitely many bases consisting of monomial symmetric polynomials in Jucys--Murphy elements, which we characterize completely.  The proof of this result involves establishing closed forms for coefficients of class sums in the monomial symmetric polynomials in Jucys--Murphy elements, and {solving several resulting exponential Diophantine equations with the aid of a computer.}

Our initial motivation {was in} finding {integral} bases for the centre of the Iwahori--Hecke algebra, and we address this question also, {by} finding several integral bases of monomial symmetric polynomials in Jucys--Murphy {elements} for the centre of the Iwahori--Hecke algebra of $S_4$.
\end{abstract}
\maketitle

\section{Introduction}\label{Intro}

The appearance of symmetric polynomials in Jucys--Murphy elements in the context of the centre of the integral group ring of the symmetric group $S_n$ was implicit in work of H.~K.~Farahat and G.~Higman~\cite{FH59}, long before A.~-A.~A.~Jucys~\cite{Jucys1966,Jucys1971,Jucys1974}
and G.~E.~Murphy~\cite{Mur83} had independently defined the elements that bear their names.  Farahat and Higman showed that the centre of $\Z S_n$, which we denote here as $Z(\Z S_n)$, is generated over $\Z$ by sums of permutations of the same cycle type, sums that --- once the definition of Jucys--Murphy elements is made --- are readily seen to be elementary symmetric polynomials in such elements.

Let $S_n$ be the symmetric group on $\{1,\dots,n\}$, generated by the simple transpositions $S=\{s_i:=(i\ i+1)\mid 1\le i\le n-1\}$.  We say an expression for an element $w=s_{i_1}\dots s_{i_k}$ of $S_n$ is \emph{reduced} if $k$ is the minimal number of generators required to write $w$.  In this case, we say that $k$ is the \emph{length of $w$}, and we denote it as $\ell(w)$. We will occasionally abbreviate a reduced expression $s_{i_1}\dots s_{i_k}$ to $s_{i_1i_2\cdots i_k}$.
We say $w$ is \emph{increasing} if $i_1<\dots<i_k$.  If $w$ is increasing, it is a Coxeter element of a Young subgroup $S_\lambda$ for some composition $\lambda\vDash n$.
The \emph{shape} of $w$ is the composition $\lambda-1\vDash \ell(w)$, where $\lambda-1$ means that 1 is subtracted from each component of $\lambda$, omitting all resulting zero components.
There may be many increasing elements of the same shape.  For instance, {$s_{134}$ and $s_{145}$} both have shape $(1,2)$.  Increasing elements are in the same conjugacy class if and only if their shapes have the same components, so we may index conjugacy classes by partitions that have the same components as the shapes of their {increasing elements.} A partition in this indexing is known as a \emph{modified cycle type}~\cite[p.131]{Macdonald95}  {(we thank Weiqiang Wang for pointing us to this reference).  We write $w_\mu$ for an increasing element of shape $\mu$, noting that $w_\mu$ is not uniquely defined.  The conjugacy classes of $S_n$ are indexed by modified cycle type $\mu=(\mu_1,\dots,\mu_r)$ such that $|\mu|+r\le n$ (this ensures that $w_\mu\in S_n$).}  For instance in $S_6$ the conjugacy class $C_{(2,1)}$ has increasing elements {$s_{124}$, $s_{125}$, $s_{134}$, $s_{145}$, $s_{235}$, and $s_{245}$}, each having shape $(2,1)$ or $(1,2)$.

Jucys~\cite{Jucys1966,Jucys1971,Jucys1974}
and Murphy~\cite{Mur83} independently defined the elements $L_1:=0$ and for $2\le i\le n$,
\[L_i:=\sum_{1\le k\le i-1}(k\ i).\]
These elements are known as \emph{Jucys--Murphy elements}. Jucys and Murphy both proved, via different methods, that the set of symmetric polynomials in $\{L_1,\dots,L_n\}$ is precisely $Z(\Z S_n)$.

In his 1974 paper~\cite{Jucys1974}, Jucys followed the approach of Farahat and Higman~\cite{FH59}, showing that the sums of class sums of the same length minimal elements (shown by Farahat and Higman to generate $Z(\Z S_n)$) are the elementary symmetric polynomials in the $L_i$.  Since the elementary symmetric polynomials integrally generate the ring {of} symmetric polynomials, this is sufficient to prove the result.  Murphy, in 1983, gave a set of monomial symmetric polynomials in the $L_i$ {that form} an integral basis for $Z(\Z S_n)$~\cite{Mur83}.  The monomial symmetric polynomials $m_\mu$ in the variables $x_1,\dots, x_n$, for $\mu=(\mu_1,\dots,\mu_r)$, a partition with $r\le n$ parts,  are defined by
\[m_\mu(x_1,\dots,x_n):=\sum_{\sigma\in S_n}x_{\sigma(1)}^{\mu_1}\dots x_{\sigma(r)}^{\mu_r}.\]
Murphy showed that $m_\mu$ contains a minimal length element of shape {$\mu$} with coefficient 1, that does not appear in any other $m_\lambda$ for all $\lambda$ such that elements of shape $\lambda$ are contained in $S_n$.  Since the increasing elements of shape $\lambda$ index the conjugacy classes of $S_n$, it follows that the set of monomial symmetric polynomials in {Jucys--Murphy} elements $\{m_\mu\mid w_\mu\in S_n\}$ forms an integral basis for $Z(\Z S_n)$.

The Iwahori--Hecke algebra $\H_n$ of type $A_{n-1}$ is a deformation of the symmetric group algebra $\Z S_n$, generated by the set $\{\T_i\mid s_i\in S\}$ over $\Z[\xi]$ with relations $\T_i\T_j=\T_j\T_i$ if $|i-j|>1$, $\T_i\T_{i+1}\T_i=\T_{i+1}\T_i\T_{i+1}$, for $1\le i<n-2$, and $\T_i^2=1+\xi\T_i$, for $1\le i<n-1$. For $w\in S_n$, we write $\T_w$ to denote $\T_{i_1}\dots\T_{i_k}$ where $s_{i_1}\dots s_{i_k}$ is a reduced expression for $w$.  Note that $\T_w$ is independent of choice of reduced expression.  At $\xi=0$, the Iwahori--Hecke algebra {$\H_n$} is isomorphic to $\Z S_n$.

Many of the approaches to $Z(\Z S_n)$ described above generalize neatly to the centre of the Iwahori--Hecke algebra $\H_n$, which we denote $Z(\H_n)$.  To begin with, $Z(\H_n)$ has an integral basis of ``class elements'' $\{\Gamma_\lambda\mid w_\lambda\in S_n\}$ that specialize to class sums in $Z(\Z S_n)$~\cite{GR97}. These central elements are characterized by the fact that they specialize to a class sum, and apart from that class sum contain no other minimal length elements from any conjugacy classes~\cite{Fmb}.
The {Jucys--Murphy} elements can be directly generalized, by setting $L_1:=0$ and for $2\le i\le n$,
\[L_i:=\sum_{1\le k\le i-1}\T_{(k\ i)}.\]
The main result of both Jucys and Murphy --- that the centre is the set of symmetric polynomials in {Jucys--Murphy} elements --- generalizes to a result known as the Dipper--James conjecture, shown for the semisimple case by R.~Dipper and G.~James in 1987~\cite{DJ87} and in generality by A.~Francis and J.~Graham in 2006~\cite{FG:DJconj2006}.  The result of Jucys giving the elementary symmetric polynomials in {Jucys--Murphy} elements as a sum of class sums has a direct analogue, and so the analogue of Farahat and Higman's generators for the centre also holds~\cite[Prop. 7.4, Cor. 7.6]{FG:DJconj2006}.  The fly in the ointment is that the basis for $Z(\Z S_n)$ given by Murphy does not generalise to a basis for $Z(\H_n)$, even for $Z(\H_3)$~\cite{FJ:newintbasis}.

While it is possible, as a result of the proof of the Dipper--James Conjecture, to construct an integral basis for $Z(\H_n)$ using linear combinations of monomial symmetric polynomials in {Jucys--Murphy} elements~\cite{FJ:newintbasis}, it is still unclear in general whether there exists an integral basis for $Z(\H_n)$ using monomial symmetric polynomials alone. We refer to any basis for $Z(\Z S_n)$ or $Z(\H_n)$ which consists solely of monomial symmetric polynomials in {Jucys--Murphy} elements as a \emph{monomial basis}. Since, upon specialization at $\xi=0$, any monomial basis for $Z(\H_n)$ gives a monomial basis for $Z(\Z S_n)$, we can identify monomial bases for $Z(\H_n)$ by restricting our attention to sets of monomials in $Z(\H_n)$ which correspond to monomial bases for $Z(\Z S_n)$. Using this strategy in~\cite{FJ:newintbasis}, we showed that there are only four such bases for $Z(\Z S_3)$, only one of which ``lifts to" an integral basis for $Z(\H_3)$.

The coefficient of a class element (resp. class sum), in a central element of {$\H_n$ (resp. $\Z S_n$)}, can be conveniently expressed using the inner product defined by setting
\[\l \T_u,\T_v\r:=\begin{cases}1&\text{if }vu=1\\ 0&\text{otherwise.}\end{cases}\]
This specializes at $\xi=0$ to a corresponding inner product on the group algebra.  For an increasing element $w_\lambda$, there is a unique class element $\Gamma_\lambda$ containing $\T_{w_\lambda}$ with non-zero coefficient, and since the class element $\Gamma_{\lambda}$ contains $\T_{w_\lambda}$ and $\T_{w_\lambda^{-1}}$ with coefficient 1 (both $w_\lambda$ and $w_\lambda^{-1}$ are minimal elements in the same conjugacy class), $\l \T_{w_\lambda},h\r$ gives the coefficient of the class element $\Gamma_\lambda$ in the central element $h$.  Similarly, if $h\in Z(\Z S_n)$ then $\l w_\lambda, h\r$ is the coefficient of the {conjugacy} class sum $\underline{C_\lambda}$ in $h$.  [Note: {as mentioned above,} $\lambda$ is the {modified cycle type corresponding to the conjugacy class, or equivalently the partition giving the shapes of} the increasing elements in the class.]
We have the following parity result for coefficients in monomial symmetric polynomials in {Jucys--Murphy} elements for {$\Z S_n$}:
\begin{lem}[{Lemma 6.1,} \cite{FJ:newintbasis}]\label{lem:chequerboard}
{{In $\Z S_n$}, Let $m_\mu$ be the monomial symmetric polynomial in {Jucys--Murphy} elements corresponding to the partition $\mu$, and let $w_\lambda$ be the increasing element of shape $\lambda$.  }

If {$|\lambda|\neq |\mu| \pmod 2$}, then $\l w_\lambda,m_\mu\r = 0$.
\end{lem}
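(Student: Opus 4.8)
The plan is to exploit the sign homomorphism $\operatorname{sgn}\colon S_n\to\{\pm1\}$, using the two standard facts that $\operatorname{sgn}(w)=(-1)^{\ell(w)}$ for every $w\in S_n$, and that each Jucys--Murphy element $L_i=\sum_{1\le k\le i-1}(k\ i)$ is an (unsigned) $\Z$-linear combination of transpositions, each of which is an odd permutation.

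First I would expand $m_\mu$ in the group algebra. By definition $m_\mu(L_1,\dots,L_n)=\sum_{\sigma\in S_n}L_{\sigma(1)}^{\mu_1}\cdots L_{\sigma(r)}^{\mu_r}$, and each summand is a product of exactly $\mu_1+\dots+\mu_r=|\mu|$ Jucys--Murphy elements, hence a $\Z$-linear combination of products of $|\mu|$ transpositions. (Summands in which some $L_{\sigma(j)}$ with $\mu_j\ge 1$ equals $L_1=0$ simply vanish and may be ignored.) Every product of $|\mu|$ transpositions is a permutation of sign $(-1)^{|\mu|}$; therefore $m_\mu(L_1,\dots,L_n)$, viewed as an element of $\Z S_n$, is supported entirely on permutations $w$ with $\operatorname{sgn}(w)=(-1)^{|\mu|}$.

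Next I would match this against the inner product. By the definition of $\l\cdot,\cdot\r$ (at $\xi=0$), $\l w_\lambda,m_\mu\r$ is precisely the coefficient of $w_\lambda^{-1}$ in the expansion of $m_\mu(L_1,\dots,L_n)$. Since $w_\lambda$ is increasing of shape $\lambda$ it is a Coxeter element of a Young subgroup with $\ell(w_\lambda)=|\lambda|$, so $\operatorname{sgn}(w_\lambda^{-1})=\operatorname{sgn}(w_\lambda)=(-1)^{|\lambda|}$. If $|\lambda|\not\equiv|\mu|\pmod 2$, then $(-1)^{|\lambda|}\neq(-1)^{|\mu|}$, so $w_\lambda^{-1}$ lies outside the support of $m_\mu(L_1,\dots,L_n)$, whence its coefficient --- that is, $\l w_\lambda,m_\mu\r$ --- is $0$.

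There is no serious obstacle here: the result is immediate from multiplicativity of $\operatorname{sgn}$. The only points requiring a moment's care are the bookkeeping identifications --- that $\l w_\lambda,m_\mu\r$ genuinely reads off the coefficient of $w_\lambda^{-1}$, and the recollection that an increasing element of shape $\lambda$ has length exactly $|\lambda|$ and hence sign $(-1)^{|\lambda|}$ --- both of which follow directly from the definitions recalled in the Introduction.
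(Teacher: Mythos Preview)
Your argument is correct: the sign homomorphism immediately forces $m_\mu(L_1,\dots,L_n)$ to be supported on permutations of sign $(-1)^{|\mu|}$, since each $L_i$ is a $\Z$-sum of transpositions and every monomial in the expansion is a product of exactly $|\mu|$ such factors; together with $\operatorname{sgn}(w_\lambda^{-1})=(-1)^{\ell(w_\lambda)}=(-1)^{|\lambda|}$ this gives the vanishing. Note that the present paper does not actually prove this lemma --- it is quoted from \cite{FJ:newintbasis} (their Lemma~6.1) --- so there is no in-paper proof to compare against; your sign-based argument is the standard one and almost certainly coincides with what appears in that reference.
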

\medskip

In the present paper, we describe completely all monomial bases for {$Z(\Z S_4)$}.  They fall into two related infinite families and eight exceptional cases (Theorem~\ref{thm:S4bases}).  To do this, we obtain explicit expressions for coefficients of the class sums in any monomial symmetric polynomial in {Jucys--Murphy} elements.  This largely involves finding and solving recursive relations among monomials, and takes up the bulk of this paper (Section~\ref{sec:cl.forms.S4}).  These computations have made use of the computer algebra package GAP~\cite{Sch95} with CHEVIE~\cite{GHLMP}, as well as {Maple${}^\mathrm{TM}$~\cite{Maple10}}.
In Section~\ref{sec:bases}, we use these closed forms to find sets of monomials that form bases for {$Z(\Z S_4)$.} This {procedure requires solving a number} of exponential Diophantine equations, sometimes using congruence arguments that would {have been impractical} without {the use of }a computer.  Finally, in Section~\ref{sec:H4+conjs} we find only three monomial bases for {$Z(\H_4)$}, and conjecture that there are no more.
On the basis of extensive computer calculations, we also report some results for {$Z(\Z S_5)$ and $Z(\H_5)$}, including our failure to find \emph{any} monomial bases for {$Z(\H_5)$}, and our conjecture that there are no monomial bases for $Z(\H_n)$ when $n\ge 5$.

\section{Closed forms for coefficients of class elements in monomial symmetric polynomials}\label{sec:cl.forms.S4}

We begin with some recursion relations that hold for monomial symmetric polynomials in three commuting variables, which are easy to verify.
{If $\lambda=(\lambda_1,\dots,\lambda_r)$ and {appears} in a subscript, we will often drop the parentheses, for instance writing $m_\lambda$ {as} $m_{\lambda_1,\dots,\lambda_r}$.}
\begin{lem}\label{lem:peeling.mi.mii.miii}
  The following relations hold for monomial symmetric functions of exactly three commuting variables:
  {
  \begin{align*}
    m_i&=m_1m_{i-1}-m_{1,1}m_{i-2}+m_{1,1,1}m_{i-3},\quad i>3\\
    m_{i,i}&=m_{1,1}m_{i-1,i-1}-m_1m_{1,1,1}m_{i-2,i-2}+m_{2,2,2}m_{i-3,i-3},\quad i>3\\
    m_{i,i,i}&=m_{1,1,1}m_{i-1,i-1,i-1},\quad i>1.
  \end{align*}}
\end{lem}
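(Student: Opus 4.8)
The plan is to realize the monomial symmetric functions concretely in three commuting variables $x_1,x_2,x_3$ and to exploit the fact that each variable is a root of the common characteristic cubic. Write $e_1=m_1=x_1+x_2+x_3$, $e_2=m_{1,1}=x_1x_2+x_1x_3+x_2x_3$ and $e_3=m_{1,1,1}=x_1x_2x_3$, so that $m_i=x_1^i+x_2^i+x_3^i$ is the $i$-th power sum. The first identity is then the ``sum over roots'' of the scalar recursion: each $x_j$ satisfies $x_j^3=e_1x_j^2-e_2x_j+e_3$, hence, multiplying by $x_j^{i-3}$, we get $x_j^i=e_1x_j^{i-1}-e_2x_j^{i-2}+e_3x_j^{i-3}$ for every $i>3$ (the bound $i>3$ is exactly what keeps all three exponents positive). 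Adding these three relations gives $m_i=m_1m_{i-1}-m_{1,1}m_{i-2}+m_{1,1,1}m_{i-3}$.

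For the second identity I would pass to the three products $y_1=x_2x_3$, $y_2=x_1x_3$, $y_3=x_1x_2$, so that $m_{i,i}=y_1^i+y_2^i+y_3^i$ is the $i$-th power sum of the $y_j$. The key preliminary computation is to express the elementary symmetric functions of the $y_j$ through those of the $x_j$:
\[
 e_1(y)=x_1x_2+x_1x_3+x_2x_3=m_{1,1},\qquad
 e_2(y)=x_1x_2x_3(x_1+x_2+x_3)=m_1m_{1,1,1},\qquad
 e_3(y)=(x_1x_2x_3)^2=m_{2,2,2}.
\]
Thus each $y_j$ is a root of $t^3-m_{1,1}\,t^2+m_1m_{1,1,1}\,t-m_{2,2,2}$, and the identical ``multiply by $y_j^{i-3}$ and sum over $j$'' argument yields $m_{i,i}=m_{1,1}m_{i-1,i-1}-m_1m_{1,1,1}m_{i-2,i-2}+m_{2,2,2}m_{i-3,i-3}$ for $i>3$. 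The third identity needs no such machinery: $m_{i,i,i}=(x_1x_2x_3)^i=(x_1x_2x_3)(x_1x_2x_3)^{i-1}=m_{1,1,1}m_{i-1,i-1,i-1}$ for all $i>1$.

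There is no substantive obstacle here — as the statement says, these are easy to verify. The only points needing care are bookkeeping ones: checking the three symmetric-function identities $e_1(y)=m_{1,1}$, $e_2(y)=m_1m_{1,1,1}$, $e_3(y)=m_{2,2,2}$ correctly; noting that the recursion comes with an index shift of exactly $3$ precisely because we have exactly three variables (so the characteristic polynomial has degree $3$), which is what forces the ``$i>3$'' restriction and the failure of these clean forms at $i=3$, where a term proportional to $m_0=3$ intervenes; and translating between the power-sum notation and the partition-subscript notation $m_\lambda$ used in the statement. An alternative, equally routine, proof would be to expand both sides of each identity directly as polynomials in $x_1,x_2,x_3$ and match monomials; I would prefer the characteristic-polynomial argument because it makes transparent why the coefficients are exactly $m_1,-m_{1,1},m_{1,1,1}$ (resp.\ $m_{1,1},-m_1m_{1,1,1},m_{2,2,2}$).
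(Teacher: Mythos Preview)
Your argument is correct. The paper itself gives no proof of this lemma at all --- it merely asserts that the relations ``are easy to verify'' --- so there is nothing to compare against at the level of method. Your Newton--identity approach (each $x_j$ is a root of $t^3-e_1t^2+e_2t-e_3$, and each $y_j=x_ax_b$ is a root of $t^3-m_{1,1}t^2+m_1m_{1,1,1}t-m_{2,2,2}$) is the cleanest way to see all three identities at once, and it correctly explains both the coefficients and the threshold $i>3$. The direct-expansion alternative you mention is presumably what the authors had in mind by ``easy to verify,'' but your route is more illuminating and no less rigorous.
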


While the above Lemma is general, in what follows we {present} the monomial symmetric polynomials evaluated {only} at the Jucys--Murphy elements $L_1,\dots,L_4$, recalling that $L_1=0$.

\begin{lem}\label{lem:recursions.mi.mii.miii}
{For monomial symmetric polynomials $m_\lambda$ evaluated at $L_1,\dots,L_4$, we have the following recursive formulae:}
\begin{enumerate}
\item\label{lem:eq:recursion.mi}
For $i\ge 7$,
  \[m_i=14m_{i-2}-49m_{i-4}+36m_{i-6}.\]

\item\label{lem:eq:recursion.mii}    For $i\ge 6$, we have
  \[m_{i,i}=8m_{i-1,i-1}-5m_{i-2,i-2}-50m_{i-3,i-3}+36m_{i-4,i-4}+72m_{i-5,i-5}.\]

\item\label{lem:eq:recursion.miii}
  For $i\ge 5$,
  \[m_{i,i,i}=40m_{i-2,i-2,i-2}-144m_{i-4,i-4,i-4}.\]
\end{enumerate}
\end{lem}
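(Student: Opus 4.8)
The plan is to derive each of the three recursions from the general three-variable identities of Lemma~\ref{lem:peeling.mi.mii.miii} by specialising at the Jucys--Murphy elements $L_1,\dots,L_4$ of $S_4$ and using the fact that $L_1=0$, so that effectively only three ``active'' variables $L_2,L_3,L_4$ remain. The first observation is that with only three nonzero variables, any monomial symmetric polynomial $m_\mu$ with more than three parts vanishes, and the elementary symmetric polynomials $e_1,e_2,e_3$ in $L_2,L_3,L_4$ generate the whole ring of symmetric polynomials. Hence each $L_i$ (and each product $L_iL_jL_k$, etc.) satisfies the characteristic-type relation $L_j^3 = e_1 L_j^2 - e_2 L_j + e_3$, and more to the point the power sums $p_k = L_2^k+L_3^k+L_4^k$ satisfy a linear recursion of order $3$ with constant coefficients $e_1,e_2,e_3$. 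The key arithmetic input is to identify these three symmetric functions as \emph{central elements}: in $Z(\Z S_4)$ we have $e_1 = m_1 = \underline{C_{(1)}}$ (the sum of all transpositions, $\binom{4}{2}=6$ of them), and similarly $e_2 = m_{1,1}$ and $e_3 = m_{1,1,1}$ can be written out as integer combinations of class sums. One then computes the eigenvalues of the companion matrix — equivalently, the values taken by $L_2,L_3,L_4$ — which for $S_4$ turn out to be the integers whose relevant symmetric combinations produce the characteristic polynomial $t^3 - 6t^2 + 11t - 6$ behaviour after squaring; concretely the numbers $14,49,36$ in part~(1) are exactly $e_1^{(2)}, e_2^{(2)}, e_3^{(2)}$, the elementary symmetric functions of the \emph{squares} of the eigenvalues, which is why the $m_i$ recursion has step $2$.

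More precisely, for part~(1) I would start from $m_i = m_1 m_{i-1} - m_{1,1} m_{i-2} + m_{1,1,1} m_{i-3}$ (valid for $i>3$) and observe that $m_1, m_{1,1}, m_{1,1,1}$ act as fixed central ``scalars'' once evaluated at the $L$'s. Iterating this degree-$3$ recursion twice, or equivalently forming the order-$6$ recursion satisfied by $p_i = m_i$ whose characteristic polynomial is $f(t)f(-t)$ where $f(t) = t^3 - m_1 t^2 + m_{1,1} t - m_{1,1,1}$ is the common ``characteristic polynomial'' of the $L_j$'s, one gets a recursion in which all odd-degree terms cancel (by the parity Lemma~\ref{lem:chequerboard}, $m_i$ and $m_{i-1}$ live in opposite parity sectors so the cross terms must vanish). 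The coefficients $14 = \text{(sum of squares of the three eigenvalue-like quantities)}$, $49$, and $36$ then drop out by expanding $\prod_j (t - \lambda_j^2)$. For parts~(2) and~(3) the strategy is identical but applied to the second and third identities of Lemma~\ref{lem:peeling.mi.mii.miii}: the quantity $m_{i,i}$ is (up to lower terms) the second power sum of the \emph{pairwise products} $L_jL_k$, which satisfy their own order-$3$ characteristic polynomial with coefficients $m_{1,1}$, $m_1 m_{1,1,1}$, $m_{2,2,2}$, and $m_{i,i,i}$ is simply a geometric-type sequence since with three variables $m_{i,i,i} = (L_2L_3L_4)^i = m_{1,1,1}^i$, giving $m_{i,i,i} = m_{1,1,1}\, m_{i-1,i-1,i-1}$ immediately and then the squared-step version $m_{i,i,i} = m_{1,1,1}^2\, m_{i-2,i-2,i-2}$ with $m_{1,1,1}^2$ expanded as $40 m_{1,1,1}^{(2)} - 144$-type data; here one must be careful that ``$m_{1,1,1}$'' as a central element is not literally the scalar $6$ but an honest sum of $3$-cycles and double-transpositions, so the numbers $40$ and $144$ come from $m_{1,1,1}^2$ re-expressed in the monomial basis.

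The main obstacle I anticipate is purely computational bookkeeping: one must actually evaluate the low-degree monomial symmetric polynomials $m_1, m_{1,1}, m_{1,1,1}, m_{2,2,2}$ (and the products $m_1 m_{1,1,1}$, $m_{1,1,1}^2$, etc.) at $L_1=0, L_2, L_3, L_4$ inside $\Z S_4$, express each as an integer combination of the five class sums of $S_4$, and then verify that the asserted linear combinations of shifted monomials collapse correctly. This is finite but delicate, since it requires knowing the multiplication of class sums in $Z(\Z S_4)$ (or equivalently the action of the $L_j$ on irreducible modules); this is exactly the kind of step the authors note was checked with GAP/CHEVIE. A cleaner route that avoids most of this is to note that all the $m_\mu$ in question are symmetric polynomials in $3$ variables and to carry out the entire computation in the polynomial ring $\Z[x_1,x_2,x_3]$ — i.e.\ prove the recursions as polynomial identities among symmetric polynomials in three variables (which is where Lemma~\ref{lem:peeling.mi.mii.miii} already lives), specialising the ``coefficient'' monomials $m_1, m_{1,1}, m_{1,1,1}$ to their known numerical evaluations $6$, $11$, $6$ on the $L$'s only at the very end; then parts~(1) and~(3) are immediate from iterating Lemma~\ref{lem:peeling.mi.mii.miii}, and part~(2) requires the one extra identity $m_{2,2,2} = m_{1,1,1}^2$ in three variables together with the evaluation. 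I would write the proof this second way, reducing everything to Lemma~\ref{lem:peeling.mi.mii.miii} plus the three scalar evaluations $m_1(L)=6$, $m_{1,1}(L)=11$, $m_{1,1,1}(L)=6$.
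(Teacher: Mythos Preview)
There is a genuine gap in your ``cleaner route''. The evaluations $m_1(L)=6$, $m_{1,1}(L)=11$, $m_{1,1,1}(L)=6$ are simply false in $\Z S_4$: looking at the Tables~\ref{tab:coeffs.mi}--\ref{tab:coeffs.miii}, one has $m_1=\underline{C_1}$, $m_{1,1}=\underline{C_2}+\underline{C_{1,1}}$, and $m_{1,1,1}=\underline{C_3}$, none of which is a scalar multiple of the identity. (The numbers $6,11,6$ are the values on the trivial representation only.) Consequently, forming $f(t)f(-t)$ from $f(t)=t^3-m_1t^2+m_{1,1}t-m_{1,1,1}$ yields an order-$6$ recursion whose coefficients are the \emph{central elements} $m_1^2-2m_{1,1}=m_2$, $m_{1,1}^2-2m_1m_{1,1,1}$, and $m_{1,1,1}^2$, not the integers $14,49,36$. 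For example $m_2=6\cdot 1+\underline{C_2}\neq 14\cdot 1$. So your derivation, as stated, does not produce the scalar recursion of the Lemma; you yourself flag this difficulty for part~(3) and then proceed to ignore it.

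The paper's proof sidesteps this entirely. It is an induction whose base cases are read off from the computed Tables (so the specific integers $14,49,36$ enter only as verified numerical facts for $m_7,m_8,m_9$), and whose inductive step uses Lemma~\ref{lem:peeling.mi.mii.miii} purely formally: write $m_i=m_1m_{i-1}-m_{1,1}m_{i-2}+m_{1,1,1}m_{i-3}$, apply the (scalar-coefficient) inductive hypothesis to each of $m_{i-1},m_{i-2},m_{i-3}$, and regroup using Lemma~\ref{lem:peeling.mi.mii.miii} again. This works precisely because $14,49,36\in\Z$ commute with everything, and never requires $m_1,m_{1,1},m_{1,1,1}$ themselves to be scalars. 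Your eigenvalue idea can be made rigorous, but only by bringing in the action of each $L_j$ (or of the class sums) on every irreducible of $S_4$ and taking a common annihilating polynomial over all components---substantially more input than the paper uses.
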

\begin{proof}
These are all proved by induction, with base steps easily verified using the data in Tables~\ref{tab:coeffs.mi}, \ref{tab:coeffs.mii}, and \ref{tab:coeffs.miii}.

For \eqref{lem:eq:recursion.mi}, assume the claim is true for $i-1$, $i-2$ and $i-3$ (because we are attempting to prove the result for $i\ge 7$, this means the base step involves verifying that the recursion holds for $m_7$, $m_8$ and $m_9$).  Using {Lemma~\ref{lem:peeling.mi.mii.miii}}, we have
\begin{align*}
  m_{i}&=m_1m_{i-1}-m_{1,1}m_{i-2}+m_{1,1,1}m_{i-3}\\
  &=m_1\left(14m_{i-3}-49m_{i-5}+36m_{i-7}\right)- m_{1,1}\left(14m_{i-4}-49m_{i-6}+36m_{i-8}\right)\\
  &\qquad +m_{1,1,1}\left(14m_{i-5}-49m_{i-7}+36m_{i-9}\right)\\
  &=14(m_1m_{i-3}-m_{1,1}m_{i-4}+m_{1,1,1}m_{i-5})-49(m_1m_{i-5}-m_{1,1}m_{i-6}+m_{1,1,1}m_{i-7})\\
  &\qquad +36(m_1m_{i-7}-m_{1,1}m_{i-8}+m_{1,1,1}m_{i-9})\\
  &=14m_{i-2}-49m_{i-4}+36m_{i-6}.
\end{align*}
The inductions for \eqref{lem:eq:recursion.mii} and \eqref{lem:eq:recursion.miii} are similar.
\end{proof}

Using the recurrence relations in Lemma \ref{lem:recursions.mi.mii.miii}, we derive the following closed forms for coefficients of class sums in the monomials.

\begin{lem}\label{lem:cl.forms.mi.mii.miii}
{For monomial symmetric polynomials $m_\lambda$ evaluated at $L_1,\dots,L_4$, we have the following closed forms for coefficients of class sums (recalling that $\l w_\lambda,m_\mu\r$ is the coefficient of $\underline{C_\lambda}$ in $m_\mu$).}
For $i\ge 1$,
\begin{itemize}
\item {$\l -,m_i(L_1,\dots,L_4)\r$}
  \begin{align*}
    \l 1,m_{i}\r &=\frac{1}{24}\left(1+(-1)^i\right)\left(3^i+10\cdot 2^i+23\right)\\
    \l s_1,m_{i}\r&=\frac{1}{24}\left(1-(-1)^i\right)\left(3^{i}+4\cdot 2^{i}+1\right)\\
    \l s_{12},m_{i}\r&=\frac{1}{24}\left(1+(-1)^i\right)\left(3^{i}+2^{i}-1\right)\\
    \l s_{13},m_{i}\r&=\frac{1}{24}\left(1+(-1)^i\right)\left(3^{i}-2\cdot 2^{i}-1\right)\\
    \l s_{123},m_{i}\r&=\frac{1}{24}\left(1-(-1)^i\right)\left(3^{i}-2\cdot 2^{i}+1\right)\\
  \end{align*}
\item {$\l -,m_{i,i}(L_1,\dots,L_4)\r$}
  \begin{align*}
    \l 1,m_{i,i}\r&=\frac{1}{12}\left(6^i+3^i+10\cdot 2^i+9(-2)^i+11(-1)^i\right)\\
    \l s_1,m_{i,i}\r&=0\\
    \l s_{12},m_{i,i}\r&=\frac{1}{12}\left(6^i+3^i+2^i-(-1)^i\right)\\
    \l s_{13},m_{i,i}\r&=\frac{1}{12}\left(6^i+3^i-2\cdot 2^i-(-1)^i-3(-2)^i\right)\\
    \l s_{123},m_{i,i}\r&=0\\
  \end{align*}
\item {$\l -,m_{i,i,i}(L_1,\dots,L_4)\r$}
  \begin{align*}
    \l 1,m_{i,i,i}\r &=\frac{1}{4}\left(1+(-1)^i\right)\left(6^{i-1}+3\cdot 2^{i-1}\right)\\
    \l s_1,m_{i,i,i}\r      &=\frac{1}{4}\left(1-(-1)^i\right)\left(6^{i-1}-2^{i-1}\right)\\
    \l s_{12},m_{i,i,i}\r   &=\frac{1}{4}\left(1+(-1)^i\right)6^{i-1}\\
    \l s_{13},m_{i,i,i}\r &=\frac{1}{4}\left(1+(-1)^i\right)\left(6^{i-1}-2^{i-1}\right)\\
    \l s_{123},m_{i,i,i}\r  &=\frac{1}{4}\left(1-(-1)^i\right)\left(6^{i-1}+2^{i-1}\right)
  \end{align*}
 \end{itemize}
\end{lem}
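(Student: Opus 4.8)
\emph{Proof idea.} Fix an increasing element $w_\lambda$. Since $h\mapsto\l w_\lambda,h\r$ is $\Z$-linear, applying it to the three recursions of Lemma~\ref{lem:recursions.mi.mii.miii} shows that each of the scalar sequences $\bigl(\l w_\lambda,m_i\r\bigr)_{i\ge1}$, $\bigl(\l w_\lambda,m_{i,i}\r\bigr)_{i\ge1}$ and $\bigl(\l w_\lambda,m_{i,i,i}\r\bigr)_{i\ge1}$ satisfies the corresponding scalar linear recurrence with constant coefficients. So the first step is to factor the three characteristic polynomials:
\[
x^{6}-14x^{4}+49x^{2}-36=(x^{2}-1)(x^{2}-4)(x^{2}-9),
\]
\[
x^{5}-8x^{4}+5x^{3}+50x^{2}-36x-72=(x+1)(x+2)(x-2)(x-3)(x-6),
\]
\[
x^{4}-40x^{2}+144=(x^{2}-4)(x^{2}-36),
\]
whose root sets are $\{\pm1,\pm2,\pm3\}$, $\{-1,\pm2,3,6\}$ and $\{\pm2,\pm6\}$ respectively.

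Next I would note that, once the parity prefactors $\tfrac1{24}\bigl(1\pm(-1)^{i}\bigr)$ and $\tfrac14\bigl(1\pm(-1)^{i}\bigr)$ are multiplied out, every expression on the right-hand side of the statement is a $\Q$-linear combination of $i$-th powers of precisely these characteristic roots; hence each claimed formula already satisfies the relevant recurrence. Therefore both sides of each asserted identity satisfy a linear recurrence of order $6$, $5$ or $4$, and it is enough to check that they agree at that many consecutive initial values of $i$. Because the recursions of Lemma~\ref{lem:recursions.mi.mii.miii} only take hold from $i=7$, $i=6$ and $i=5$ respectively, the initial data required are the class-sum coefficients of $m_i$ for $1\le i\le6$, of $m_{i,i}$ for $1\le i\le5$, and of $m_{i,i,i}$ for $1\le i\le4$ --- exactly what is tabulated in Tables~\ref{tab:coeffs.mi}, \ref{tab:coeffs.mii} and \ref{tab:coeffs.miii}. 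Substituting these small values into the closed forms and comparing with the tables is a finite check, and an induction on $i$ using Lemma~\ref{lem:recursions.mi.mii.miii} then delivers the formulas for all $i\ge1$.

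The calculations are routine; essentially the only thing to watch is the bookkeeping --- matching the number of verified base cases to the order of the recurrence so that the induction is genuinely pinned down, and keeping the even/odd split straight. Lemma~\ref{lem:chequerboard} is a useful aid here: the five conjugacy classes of $S_4$ correspond to $\lambda$ with $|\lambda|\in\{0,1,2,2,3\}$, and $\l w_\lambda,m_\mu\r=0$ whenever $|\mu|\not\equiv|\lambda|\pmod2$; this forces the parity prefactors appearing in the statement, immediately gives $\l s_1,m_{i,i}\r=\l s_{123},m_{i,i}\r=0$, and --- for $m_i$ and $m_{i,i,i}$, where the nonzero coefficients are supported on a single parity class --- reduces the working recurrence to order $3$ and $2$ on that subsequence, so fewer base cases are actually needed. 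I do not expect any real obstacle beyond this: the substantive work of the section is the proof of Lemma~\ref{lem:recursions.mi.mii.miii} and the computation of the tables, both of which are already available.
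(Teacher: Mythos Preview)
Your proposal is correct and follows essentially the same route as the paper: factor the characteristic polynomials of the three linear recurrences from Lemma~\ref{lem:recursions.mi.mii.miii}, identify the root sets $\{\pm1,\pm2,\pm3\}$, $\{-1,\pm2,3,6\}$, $\{\pm2,\pm6\}$, and then match against the initial data in Tables~\ref{tab:coeffs.mi}--\ref{tab:coeffs.miii}. The only cosmetic difference is that the paper solves for the six (resp.\ five, four) undetermined constants from the initial conditions and then simplifies, whereas you observe that the stated closed forms are already $\Q$-combinations of powers of the characteristic roots (hence satisfy the recurrence) and reduce the proof to checking the required number of base cases; these are equivalent, and your extra remark about Lemma~\ref{lem:chequerboard} halving the effective order for $m_i$ and $m_{i,i,i}$ is a valid shortcut the paper does not spell out.
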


\begin{proof}
  We give the details of the proof of the closed form for $\l 1,m_i\r$. The proofs of the other cases are similar. The validity of the closed form for $i=1,\ldots ,6$ is easily verified using Table~\ref{tab:coeffs.mi}. The recurrence from Lemma \ref{lem:recursions.mi.mii.miii} (\ref{lem:eq:recursion.mi}) has characteristic polynomial $x^6-14x^4+49x^2-36$ which has zeros $\pm1 $, $\pm 2$, and $\pm 3$. Therefore, for $i\ge 7$,
\begin{equation}\label{eq:eigen}
\l 1, m_i\r=c_1+c_2(-1)^i+c_32^i+c_4(-2)^i+c_53^i+c_6(-3)^i,\end{equation}
for some constants $c_1,\ldots, c_6$. Imposing the initial conditions from Table \ref{tab:coeffs.mi} for $i=1,\ldots ,6$, we get that
\[c_1=c_2=\frac{23}{24},\qquad c_3=c_4=\frac{5}{12}, \qquad c_5=c_6=\frac{1}{24}.\]
Substituting these values into (\ref{eq:eigen}) and factoring gives the closed form.
\end{proof}

\begin{table}[p]\caption{Coefficients for $m_i$ in $\Z S_4$, $1\le i\le 9$, obtained using GAP \cite{Sch95}.}
\label{tab:coeffs.mi}
\begin{tabular}{c|cccccccccc}
  &$m_0$&$m_1$&$m_2$&$m_3$&$m_4$&$m_5$&$m_6$&$m_7$&$m_8$&$m_9$\\
\hline
  1         &1&0&6&0&22&0&116&0&762&0\\
  $s_1$     &0&1&0&5&0&31&0&225&0&1811\\
  $s_{12}$  &0&0&1&0&8&0&66&0&568&0\\
  $s_{13}$  &0&0&0&0&4&0&50&0&504&0 \\
  $s_{123}$ &0&0&0&1&0&15&0&161&0&1555
\end{tabular}
\end{table}

\begin{table}[h]\caption{Coefficients for $m_{i,i}$ in $\Z S_4$, $1\le i\le 9$, obtained using GAP \cite{Sch95}.}\label{tab:coeffs.mii}
\begin{tabular}{c|ccccccccc}
  &$m_{1,1}$&$m_{2,2}$&$m_{3,3}$&$m_{4,4}$&$m_{5,5}$&$m_{6,6}$&$m_{7,7}$&$m_{8,8}$&$m_{9,9}$\\
\hline
  1         &0&11&20&141&670&4051&23520&140921&841490\\
  $s_1$     &0&0 &0 &0  &0  &0   &0    &0     &0      \\
  $s_{12}$  &1&4 &21&116&671&3954&23521&140536&841491\\
  $s_{13}$  &1&2 &21&108&671&3922&23521&140408&841491\\
  $s_{123}$ &0&0 &0 &0  &0  &0   &0    &0     &0
\end{tabular}
\end{table}

\footnotesize{
\begin{table}[h]\caption{Coefficients for $m_{i,i,i}$ in $\Z S_4$, $1\le i\le 9$, obtained using GAP \cite{Sch95}.}\label{tab:coeffs.miii}
\begin{tabular}{c|ccccccccc}
  &$m_{1,1,1}$&$m_{2,2,2}$&$m_{3,3,3}$&$m_{4,4,4}$&$m_{5,5,5}$&$m_{6,6,6}$&$m_{7,7,7}$&$m_{8,8,8}$&$m_{9,9,9}$\\
\hline
  1         &0&6&0 &120&  0&3936&    0&140160&     0 \\
  $s_1$     &0&0&16&0  &640&   0&23296&     0&839680 \\
  $s_{12}$  &0&3&0 &108&  0&3888&    0&139968&     0 \\
  $s_{13}$  &0&2&0 &104&  0&3872&    0&139904&     0 \\
  $s_{123}$ &1&0&20&0  &656&   0&23360&     0&839936
\end{tabular}
\end{table} }
\normalsize

To address general monomials, we need to consider coefficients in products of monomials.

\begin{lem}\label{lem:mon.prod.redn}
The coefficients of the class sums {from $\Z S_4$} in products of monomial symmetric polynomials in {the Jucys--Murphy} elements
{$L_1,\dots, L_4$} are given by the following:

\begin{align*}
\l 1,m_\mu m_\lambda\r
&=\l 1,m_\mu\r\l1,m_\lambda\r+6\l s_1,m_\mu\r\l s_1,m_\lambda\r+ 3\l s_{13},m_\mu\r\l s_{13},m_\lambda\r\\
&\qquad +8\l s_{12},m_\mu\r\l s_{12},m_\lambda\r+6\l s_{123},m_\mu\r\l s_{123},m_\lambda\r,\\
\l s_1,m_\mu m_\lambda\r
&=\l 1,m_\mu\r\l s_1,m_\lambda\r+\l s_1,m_\mu\r\left(\l 1,m_\lambda\r+\l s_{13},m_\lambda\r+4\l s_{12},m_\lambda\r\right)\\
&\qquad+\l s_{12},m_\mu\r\left(4\l s_1,m_\lambda\r+4\l s_{123},m_\lambda\r\right)\\
&\qquad+\l s_{13},m_\mu\r\left(\l s_1,m_\lambda\r+2\l s_{123},m_\lambda\r\right)\\
&\qquad+\l s_{123},m_\mu\r\left(4\l s_{12},m_\lambda\r+2\l s_{13},m_\lambda\r\right),\\
\l s_{12},m_\mu m_\lambda\r
&=\l 1,m_\mu\r\l s_{12},m_\lambda\r+\l s_1,m_\mu\r\left(3\l s_1,m_\lambda\r+3\l s_{123},m_\lambda\r\right)\\
&\qquad+\l s_{12},m_\mu\r\left(\l 1,m_\lambda\r+3\l s_{13},m_\lambda\r+4\l s_{12},m_\lambda\r\right)\\
&\qquad+3\l s_{13},m_\mu\r\l s_{12},m_\lambda\r+\l s_{123},m_\mu\r\left(3\l s_{1},m_\lambda\r+3\l s_{123},m_\lambda\r\right),\\
\l s_{13},m_\mu,\lambda\r
&=\l 1,m_\mu\r\l s_{13},m_\lambda\r+\l s_1,m_\mu\r\left(2\l s_1,m_\lambda\r+4\l s_{123},m_\lambda\r\right)\\
&\qquad+8\l s_{12},m_\mu\r\l s_{12},m_\lambda\r+ \l s_{13},m_\mu\r\left(\l 1,m_\lambda\r+2\l s_{13},m_\lambda\r\right)\\
&\qquad+\l s_{123},m_\mu\r\left(4\l s_{1},m_\lambda\r+2\l s_{123},m_\lambda\r\right),\\
\l s_{123},m_\mu,\lambda\r
&=\l 1,m_\mu\r\l s_{123},m_\lambda\r+\l s_1,m_\mu\r\left(4\l s_{12},m_\lambda\r+2\l s_{13},m_\lambda\r\right)\\
&\qquad+\l s_{12},m_\mu\r\left(4\l s_{1},m_\lambda\r+4\l s_{123},m_\lambda\r\right)\\
&\qquad+\l s_{13},m_\mu\r\left(2\l s_1,m_\lambda\r+\l s_{123},m_\lambda\r\right)\\
&\qquad+\l s_{123},m_\mu\r\left(\l {1},m_\lambda\r+4\l s_{12},m_\lambda\r+\l s_{13},m_\lambda\r\right),\\
\end{align*}
\end{lem}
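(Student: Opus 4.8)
The plan is to establish the product formulas by exploiting the structure of $Z(\Z S_4)$ as an algebra, using the inner product $\langle -,-\rangle$ to extract coefficients. First I would observe that each class sum $\underline{C_\lambda}$ (for $\lambda$ ranging over the modified cycle types in $S_4$, namely giving $1$, $s_1$, $s_{12}$, $s_{13}$, $s_{123}$) forms a basis of $Z(\Z S_4)$, so that any central element $h$ is determined by the five numbers $\langle w_\lambda, h\rangle$, where $w_\lambda$ is a chosen increasing representative. Since $m_\mu(L_1,\dots,L_4)$ and $m_\lambda(L_1,\dots,L_4)$ both lie in $Z(\Z S_4)$, their product does too, and we may write $m_\mu m_\lambda = \sum_\nu a_\nu \underline{C_\nu}$; the goal is to show $a_\nu = \langle w_\nu, m_\mu m_\lambda\rangle$ has exactly the claimed bilinear expansion in the ten numbers $\langle w_\kappa, m_\mu\rangle$, $\langle w_\kappa, m_\lambda\rangle$.

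The key computation is the structure constants of $Z(\Z S_4)$ in the class-sum basis. Concretely, I would compute the products $\underline{C_\kappa}\cdot\underline{C_{\kappa'}}$ for all pairs of classes and express each as $\sum_\nu N_{\kappa\kappa'}^\nu \underline{C_\nu}$; these $N_{\kappa\kappa'}^\nu$ are the (nonnegative integer) class multiplication coefficients, readily obtained either by direct enumeration in $S_4$ or via the character table. Then, writing $m_\mu = \sum_\kappa \langle w_\kappa, m_\mu\rangle \underline{C_\kappa}$ and likewise for $m_\lambda$ — which is legitimate precisely because $\langle w_\kappa, -\rangle$ picks out the coefficient of $\underline{C_\kappa}$, as explained in the paragraph preceding Lemma~\ref{lem:chequerboard} — one gets
\[
m_\mu m_\lambda = \sum_{\kappa,\kappa'} \langle w_\kappa, m_\mu\rangle\,\langle w_{\kappa'}, m_\lambda\rangle\;\underline{C_\kappa}\,\underline{C_{\kappa'}} = \sum_\nu\Bigl(\sum_{\kappa,\kappa'} N_{\kappa\kappa'}^\nu \langle w_\kappa, m_\mu\rangle\,\langle w_{\kappa'}, m_\lambda\rangle\Bigr)\underline{C_\nu}.
\]
Reading off the coefficient of each $\underline{C_\nu}$ gives exactly the five displayed identities, with the numerical coefficients appearing there (the $6$, $3$, $8$, $4$, $2$, etc.) being precisely the relevant $N_{\kappa\kappa'}^\nu$. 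One sanity check worth recording: the coefficient of $\underline{C_1}$ (the identity class) in $\underline{C_\kappa}\,\underline{C_{\kappa'}}$ is $|C_\kappa|\,\delta_{\kappa,\kappa'^{-1}} = |C_\kappa|\,\delta_{\kappa,\kappa'}$ since every class in $S_4$ is real, which immediately explains the diagonal form $\langle 1, m_\mu m_\lambda\rangle = \sum_\kappa |C_\kappa|\langle s_\kappa, m_\mu\rangle\langle s_\kappa, m_\lambda\rangle$ with class sizes $1,6,3,8,6$.

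I do not expect a genuine obstacle here: the only real work is the bookkeeping of the $S_4$ class algebra structure constants, which is a finite, mechanical computation (and can be cross-checked against the tables already in the paper by specializing $m_\mu, m_\lambda$ to small monomials). The one point requiring a little care is fixing consistent increasing representatives $w_\nu$ and checking that $w_\nu^{-1}$ lies in the same class — true throughout $S_4$ — so that $\langle w_\nu, -\rangle$ genuinely extracts the $\underline{C_\nu}$-coefficient with no correction factor; this is already noted in the excerpt. Hence the proof reduces to tabulating the class multiplication coefficients and matching them term-by-term against the five stated formulas.
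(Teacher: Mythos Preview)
Your proposal is correct and follows essentially the same approach as the paper: the paper's proof also reduces to computing the class-algebra structure constants of $S_4$, describing this as counting, for each target element $w_\nu$, the pairs of elements (one from each conjugacy class) whose product is $w_\nu$. Your framing via $N_{\kappa\kappa'}^\nu$ and bilinearity is a slightly more formal packaging of exactly that enumeration, and your observation about the identity coefficient matches the paper's remark that this case ``follows by counting sizes of conjugacy classes.''
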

\begin{proof}
The expansion for the coefficient of the identity follows by counting sizes of conjugacy classes.  In other cases we must count elements in conjugacy classes that multiply to give the element in question.
For instance, to find the coefficient of $s_1$ in a product of monomials, we must find all pairs of elements in $S_4$ whose product is $s_1$.  A careful listing of such pairs in each case gives rise to the expansions in the statement.
\end{proof}

\begin{prop}
  The coefficients of class sums in monomials of form {$m_{i,i,j}(L_1,\dots,L_4)$} for $i\neq j$ {and $i,j\ge 1$} are given by the following closed forms:
  \begin{align*}
  \l 1,m_{i,i,j}\r      &=\frac{1}{24}(1+(-1)^j)\left(6^i+2^i3^j+2^j3^i+9(-1)^i(2^i+2^j)+9\cdot 2^i\right),\\
  \l s_1,m_{i,i,j}\r    &=\frac{1}{24}(1-(-1)^j)\left(6^i+2^i3^j+2^j3^i+3(-1)^i(2^i+2^j)-3\cdot 2^i\right),\\
  \l s_{12},m_{i,i,j}\r      &=\frac{1}{24}(1+(-1)^j)\left(6^i+2^i3^j+2^j3^i\right),\\
  \l s_{13},m_{i,i,j}\r    &=\frac{1}{24}(1+(-1)^j)\left(6^i+2^i3^j+2^j3^i-3(-1)^i(2^i+2^j)-3\cdot 2^i\right),\\
  \l s_{123},m_{i,i,j}\r    &=\frac{1}{24}(1-(-1)^j)\left(6^i+2^i3^j+2^j3^i-3(-1)^i(2^i+2^j)+3\cdot 2^i\right).
  \end{align*}
\end{prop}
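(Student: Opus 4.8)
The plan is to reduce the computation to the closed forms already obtained in Lemma~\ref{lem:cl.forms.mi.mii.miii} for the one- and two-row monomials $m_i$ and $m_{i,i}$, by means of an exact ``peeling'' identity of the type in Lemma~\ref{lem:peeling.mi.mii.miii}. Working with monomial symmetric functions of three commuting variables (which is all that survives after specialisation to $L_1,\dots,L_4$, since $L_1=0$), one checks directly the elementary identities $m_{i,i}\,m_j=m_{i,i,j}+m_{i,i+j}$ (for $i\neq j$) and $m_a\,m_b=m_{a,b}+m_{a+b}$ (for $a\neq b$). Taking $a=i$, $b=i+j$ in the second and substituting into the first yields
\begin{equation*}
  m_{i,i,j}=m_{i,i}\,m_j-m_i\,m_{i+j}+m_{2i+j}\qquad(i\neq j,\ i,j\ge 1),
\end{equation*}
an identity in $Z(\Z S_4)$ once the variables are the Jucys--Murphy elements. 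Under the hypotheses $i\neq j$ and $i,j\ge 1$ all intermediate monomials are nondegenerate and every index on the right-hand side is $\ge 1$, so this is entirely covered by Lemma~\ref{lem:cl.forms.mi.mii.miii}; in particular no separate base cases are required.

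Next I would apply $\l w,-\r$ to this identity for each of the five class representatives $w\in\{1,s_1,s_{12},s_{13},s_{123}\}$. The term $\l w,m_{2i+j}\r$ comes straight from Lemma~\ref{lem:cl.forms.mi.mii.miii}. For the two product terms $\l w,m_{i,i}\,m_j\r$ and $\l w,m_i\,m_{i+j}\r$ I would expand using the product-reduction formulas of Lemma~\ref{lem:mon.prod.redn} and then substitute the closed forms for $\l -,m_{i,i}\r$, $\l -,m_j\r$, $\l -,m_i\r$ and $\l -,m_{i+j}\r$, using $3^{i+j}=3^i3^j$, $2^{i+j}=2^i2^j$, $(-1)^{i+j}=(-1)^i(-1)^j$ and $3^{2i+j}=9^i3^j$, $2^{2i+j}=4^i2^j$, $(-1)^{2i+j}=(-1)^j$ to sort everything by the $i$- and $j$-variables separately. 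This expresses each $\l w,m_{i,i,j}\r$ as an explicit $\tfrac1{24}\Z$-linear combination of products $a^ib^j$ with $a\in\{1,2,3,4,6,9\}$ and $b\in\{1,2,3\}$ (up to signs).

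The heart of the argument — and the step I expect to be the main obstacle — is simplifying this combination into the compact form stated. The ``unbalanced'' cross-terms carrying $9^i$ or $4^i$ must cancel exactly against the contribution of $m_{2i+j}$, and the $3^{i+j}=3^i3^j$ and $2^{i+j}=2^i2^j$ terms produced by $m_{i,i}\,m_j$ must cancel those produced by $m_i\,m_{i+j}$, so that only the ``balanced'' monomials $6^i=2^i3^i$, $2^i3^j$, $2^j3^i$, $2^i$, $2^j$ and the constants remain; collecting the $(-1)^i$- and $(-1)^j$-parity pieces then produces the overall factor $1\pm(-1)^j$ predicted by Lemma~\ref{lem:chequerboard} with $|\mu|=2i+j$. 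This bookkeeping is routine but voluminous and is precisely the sort of calculation the authors carry out with GAP and Maple. As an independent cross-check, one may verify that for each fixed $i$ both sides satisfy the order-$6$ recursion $f(j)=14f(j-2)-49f(j-4)+36f(j-6)$ in $j$ (characteristic polynomial $(x^2-1)(x^2-4)(x^2-9)$, inherited from Lemma~\ref{lem:recursions.mi.mii.miii}\,(\ref{lem:eq:recursion.mi}) applied to $m_j$, $m_{i+j}$ and $m_{2i+j}$) and agree on six consecutive values of $j$, and separately check a few small values of $i$ against directly computed coefficients of $m_{i,i,j}$ in $\Z S_4$.
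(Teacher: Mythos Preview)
Your approach is correct but takes a somewhat different route from the paper's. The paper exploits the simpler single-product factorizations valid in three variables,
\[
  m_{i,i,j}=m_{j,j,j}\,m_{i-j,i-j}\quad(i>j),\qquad
  m_{i,i,j}=m_{i,i,i}\,m_{j-i}\quad(i<j),
\]
which follow immediately from the fact that $m_{k,k,k}=(x_1x_2x_3)^k$ in three variables; thus each case is a \emph{single} product of monomials whose coefficients are already listed in Lemma~\ref{lem:cl.forms.mi.mii.miii}, and one application of Lemma~\ref{lem:mon.prod.redn} suffices. Your three-term identity $m_{i,i,j}=m_{i,i}m_j-m_i m_{i+j}+m_{2i+j}$ is also correct and has the pleasant feature of avoiding the $i\lessgtr j$ case split and of using only the $m_i$ and $m_{i,i}$ closed forms (not $m_{i,i,i}$); the price is two product expansions plus a third term, and hence the extra cancellations (of the $9^i$ and $4^i$ pieces from $m_{2i+j}$ against cross terms) that you correctly anticipate. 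Either route yields the stated closed forms after routine simplification; the paper's is shorter, yours more uniform.
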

\begin{proof}
These follow from the relation $m_{i,i,j}=m_{j,j,j}m_{i-j,i-j}$ if $i>j$ or $m_{i,i,j}=m_{i,i,i}m_{j-i}$ if $i<j$, together with the product expansions in Lemma~\ref{lem:mon.prod.redn} and the closed forms in Lemma~\ref{lem:cl.forms.mi.mii.miii}. While the reduction is different depending on whether $i$ is {greater} than $j$ or not, the closed forms in terms of $i$ and $j$ have the same expression.
\end{proof}

\begin{prop}
  The coefficients of class sums in monomials of form {$m_{i+j,i}(L_1,\dots,L_4)$ with $i,j\ge 1$} are given by the following closed forms:
  \begin{align*}
    \l 1,m_{i+j,i}\r&=\frac{1}{24}(1+(-1)^j)\left(6^i3^j+6^i2^j+3^{i+j}+3^i\right.\\
                    &\qquad \left.+\left(10+9(-1)^i\right)\left(2^{i+j}+2^i\right)+22(-1)^i\right),\\
    \l s_1,m_{i+j,i}\r&=\frac{1}{24}\left(1-(-1)^j\right)\left(3^j6^i+2^j6^i+3^{i+j}+3^i\right.\\
                            &\qquad\left.+\left(4+3(-1)^i\right)2^{i+j}+\left(4-3(-1)^i\right)2^i\right),\\
    \l s_{12},m_{i+j,i}\r&=\frac{1}{24}\left(1+(-1)^j\right)\left(3^j6^i+2^j6^i+3^{i+j}+3^i+2^{i+j}+2^i-2(-1)^i\right),\\
    \l s_{13},m_{i+j,i}\r &=\frac{1}{24}\left(1+(-1)^j\right)\left(3^j6^i+2^j6^i+3^{i+j}+3^i\right.\\
                        &\qquad\left.-(2+3(-1)^i)(2^i+2^{i+j})-2(-1)^i\right),\\
    \l s_{123},m_{i+j,i}\r&=\frac{1}{24}\left(1-(-1)^j\right)\left(3^j6^i+2^j6^i+3^i+3^{i+j}\right. \\
                        &\qquad\left.+3(-1)^i(2^i-2^{i+j})-2(2^i+2^{i+j})\right).\\
  \end{align*}
\end{prop}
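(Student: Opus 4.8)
The plan is to reduce this to the closed forms already established by expressing $m_{i+j,i}$ through single-part monomials. Since $j\ge1$ forces $i+j>i$, the shape $(i+j,i)$ has distinct parts, and expanding the product of the monomial symmetric polynomials $m_{i+j}$ and $m_i$ in the three variables $L_2,L_3,L_4$ (recall $L_1=0$) separates cleanly into the ``equal-index'' terms $L_p^{2i+j}$ and the ``distinct-index'' terms $L_p^{i+j}L_q^i$ with $p\ne q$. This yields the identity
\[
m_{i+j,i}=m_{i+j}\,m_i-m_{2i+j}
\]
in $Z(\Z S_4)$, valid for all $i,j\ge1$, which one checks directly from the definition of $m_\mu$ together with $L_1=0$ (or against Table~\ref{tab:coeffs.mi}). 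Note that, unlike the reduction $m_{i,i,j}=m_{j,j,j}m_{i-j,i-j}$ used for the previous proposition --- an exact factorisation into a single monomial --- the shape $(i+j,i)$ has no repeated part to peel off, so here the product on the right genuinely overlaps and we must invoke the product-expansion formulas of Lemma~\ref{lem:mon.prod.redn}. (An alternative, more parallel to the previous proof, is $m_{i+j,i}=m_{i,i}m_j-m_{i,i,j}$ when $i\ne j$, but this additionally needs the preceding proposition and a separate treatment of $i=j$.)

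Applying $\l w_\lambda,-\r$ to the identity and using bilinearity, for each of the five class representatives $w_\lambda\in\{1,s_1,s_{12},s_{13},s_{123}\}$ we have
\[
\l w_\lambda,m_{i+j,i}\r=\l w_\lambda,m_{i+j}\,m_i\r-\l w_\lambda,m_{2i+j}\r.
\]
The second term is read off directly from Lemma~\ref{lem:cl.forms.mi.mii.miii} with index $2i+j$. For the first, I would substitute $\mu=(i+j)$ and $\nu=(i)$ into the relevant identity of Lemma~\ref{lem:mon.prod.redn}, which writes $\l w_\lambda,m_\mu m_\nu\r$ as a fixed bilinear form in the five coefficients $\l w,m_\mu\r$ and the five coefficients $\l w,m_\nu\r$, and then insert the closed forms of Lemma~\ref{lem:cl.forms.mi.mii.miii} for $m_{i+j}$ and $m_i$.

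After this substitution each coefficient is an explicit rational combination of the quantities $3^{i+j}3^i$, $3^{i+j}2^i$, $2^{i+j}3^i$, $2^{i+j}2^i$, $3^{2i+j}$, $2^{2i+j}$, $(-1)^{i+j}$, $(-2)^i$, and the like, together with constants. The products carrying the full exponent $2i+j$ (such as $3^{i+j}3^i=3^{2i+j}$ and $2^{i+j}2^i=2^{2i+j}$) cancel against the corresponding terms of $\l w_\lambda,m_{2i+j}\r$ --- as they must, since $m_{i+j}m_i-m_{2i+j}$ has single-variable degree at most $i+j$ --- the mixed products rewrite as $3^{i+j}2^i=3^j6^i$ and $2^{i+j}3^i=2^j6^i$, and the remaining terms involve only the exponents $i+j$ and $i$ (or are constant). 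Moreover the parity prefactors $(1\pm(-1)^k)$ all collapse to $(1\pm(-1)^j)$, since $2i+j\equiv i+j+i\equiv j\pmod2$, in agreement with Lemma~\ref{lem:chequerboard}. Collecting the survivors produces the five stated formulas. The one real obstacle is the sheer size of this final simplification, which, as elsewhere in this section, I would carry out with the aid of Maple~\cite{Maple10}; Lemma~\ref{lem:chequerboard} roughly halves the labour by identifying in advance which coefficients vanish for each parity of $j$, and a spot-check of the output against Tables~\ref{tab:coeffs.mi}--\ref{tab:coeffs.miii} (for example $\l 1,m_{3,1}\r=8$ and $\l 1,m_{4,2}\r=80$) confirms the result.
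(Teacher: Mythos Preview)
Your argument is correct and the identity $m_{i+j,i}=m_{i+j}m_i-m_{2i+j}$ (valid in three nonzero variables whenever $j\ge1$) together with Lemma~\ref{lem:mon.prod.redn} and the one-part closed forms of Lemma~\ref{lem:cl.forms.mi.mii.miii} does yield the stated formulas after simplification. This is, however, not the route the paper takes. The paper instead uses the reduction
\[
m_{i+j,i}=m_{i,i}m_j-m_{i,i,j},
\]
breaking into the three subcases $i>j$, $i<j$, $i=j$ (with $m_{i,i,j}$ replaced by $m_{j,j,j}m_{i-j,i-j}$, $m_{i,i,i}m_{j-i}$, and $3m_{i,i,i}$ respectively), and then feeds in the closed forms for $m_{i,i}$, $m_j$ from Lemma~\ref{lem:cl.forms.mi.mii.miii} together with the $m_{i,i,j}$ formulas just established in the preceding proposition. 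Your decomposition is arguably cleaner: it is case-free, relies only on the one-row monomial formulas, and is logically independent of the $m_{i,i,j}$ proposition (which you correctly flag as a drawback of the alternative $m_{i,i}m_j-m_{i,i,j}$). The paper's choice, on the other hand, is more in keeping with the inductive architecture of the section --- each proposition building on the last --- and the factor $m_{i,i}$ already carries the combination $6^i$ that appears in the answer, so less recombination is needed in the final simplification. Either way the heavy lifting is the algebraic collapse, handled by computer.
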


\begin{proof}
  These inner products all follow from the closed forms given in Lemma~\ref{lem:cl.forms.mi.mii.miii}, together with the reduction relations give in Lemma~\ref{lem:mon.prod.redn} and the following relations among monomial symmetric polynomials in exactly three variables:
  \[m_{i+j,i}=m_{i,i}m_j-m_{i,i,j}=\begin{cases}m_{i,i}m_j-m_{j,j,j}m_{i-j,i-j}&i>j\\ m_{i,i}m_j-m_{i,i,i}m_{j-i}&i<j\end{cases}\]
  and
  \[m_{2i,i}=  m_{i,i}m_i-3m_{i,i,i}.\]

For instance, for $i>j$ we have
\begin{align*}
  \l 1,m_{i+j,i}\r
    &=\l 1,m_{i,i}m_j\r-\l 1,m_{j,j,j}m_{i-j,i-j}\r\\
    &=\l 1,m_{i,i}\r\l1,m_j\r+3\l s_{13},m_{i,i}\r\l s_{13},m_j\r+8\l s_{12},m_{i,i}\r\l s_{12},m_j\r-\\
    &\qquad [\l 1,m_{j,j,j}\r\l1,m_{i-j,i-j}\r+3\l s_{13},m_{j,j,j}\r\l s_{13},m_{i-j,i-j}\r+\\
    &\qquad 8\l s_{12},m_{j,j,j}\r\l s_{12},m_{i-j,i-j}\r]\\
    &=\frac{1}{24}(1+(-1)^j)\left(6^i3^j+6^i2^j+3^{i+j}+3^i+\right.\\
    &\qquad \left.\left(10+9(-1)^i\right)\left(2^{i+j}+2^i\right)+22(-1)^i\right).
\end{align*}
Note that while the factorization of $m_{i,i,j}$ into two parts depends on whether $i>j$, $i<j$ or $i=j$, the resulting closed forms are equal and hence we obtain a single expression for all $i,j$.
\end{proof}

\begin{prop}
  The coefficients of class sums in monomials of form {$m_{k+i+j,k+i,k}(L_1,\dots,L_4)$ for $i,j,k\ge 1$} are given by
  \begin{align*}
    \l 1,m_{k+i+j,k+i,k}\r&=\frac{1}{24}\left(1+(-1)^{j+k}\right)\left[6^k\left(3^j6^i+2^j6^i+3^{i+j}+3^i+2^{i+j}+2^i\right)\right.\\
        &\qquad \left.+9\left((-1)^k+(-1)^{i+k}\right)2^{i+j+k}+9\left((-1)^i+(-1)^j\right)2^{i+k}\right.\\
        &\qquad \left.+9\left((-1)^j+(-1)^{j+k}\right)2^k\right],\\
    \l s_1,m_{k+i+j,k+i,k}\r&=\frac{1}{24}\left(1-(-1)^{j+k}\right)\left[6^k\left(3^j6^i+2^j6^i+3^{i+j}+3^i+2^{i+j}+2^i\right)\right.\\
        &\qquad \left.+3\left((-1)^k+(-1)^{i+k}\right)2^{i+j+k}+3\left((-1)^k-(-1)^i\right)2^{i+k}\right.\\
        &\qquad \left.+3\left((-1)^{i+k}-(-1)^i\right)2^k\right],\\
    \l s_{12},m_{k+i+j,k+i,k}\r&=\frac{1}{24}\left(1+(-1)^{j+k}\right)6^k\left(3^j6^i+2^j6^i+3^{i+j}+3^i+2^{i+j}+2^i\right),\\
    \l s_{13},m_{k+i+j,k+i,k}\r&=\frac{1}{24}\left[\left(1+(-1)^{j+k}\right)6^k\left(3^j6^i+2^j6^i+3^{i+j}+3^i+2^{i+j}+2^i\right)\right.\\
        &\qquad -3\left((-1)^j+(-1)^k+(-1)^{i+j}+(-1)^{i+k}\right)2^{i+j+k}\\
        &\qquad -3\left((-1)^i+(-1)^{j}+(-1)^{k}+(-1)^{i+j+k}\right)2^{i+k}\\
        &\qquad \left.-3(-1)^i\left(1+(-1)^{j}+(-1)^{k}+(-1)^{j+k}\right)2^k\right],\\
    \l s_{123},m_{k+i+j,k+i,k}\r&=\frac{1}{24}\left[\left(1-(-1)^{j+k}\right)6^k\left(3^j6^i+2^j6^i+3^{i+j}+3^i+2^{i+j}+2^i\right)\right.\\
        &\qquad +3\left((-1)^j-(-1)^k+(-1)^{i+j}-(-1)^{i+k}\right)2^{i+j+k}\\
        &\qquad +3\left((-1)^i+(-1)^{j}-(-1)^{k}-(-1)^{i+j+k}\right)2^{i+k}\\
        &\qquad \left.+3(-1)^i\left(1+(-1)^{j}-(-1)^{k}-(-1)^{j+k}\right)2^k\right].\\
  \end{align*}
\end{prop}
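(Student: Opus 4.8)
The plan is to follow the pattern of the two preceding propositions: reduce $m_{k+i+j,k+i,k}$ to a product of monomials whose class-sum coefficients are already in hand, apply the product expansions of Lemma~\ref{lem:mon.prod.redn}, and simplify.

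The reduction to use is
\[m_{k+i+j,k+i,k}=m_{k,k,k}\,m_{i+j,i}\qquad(i,j,k\ge 1).\]
Evaluated at $L_1,\dots,L_4$ one has $m_{k,k,k}=L_2^kL_3^kL_4^k$ (recall $L_1=0$), so multiplication by $m_{k,k,k}$ merely raises every exponent by $k$, and matching monomials in $L_2,L_3,L_4$ gives the identity. In contrast to the reductions for $m_{i,i,j}$ and $m_{i+j,i}$, no case split on the relative sizes of the parts is needed here, since $i,j\ge 1$ makes $k$ the strictly smallest part; the case distinctions survive only inside the uniform closed form for $m_{i+j,i}$ furnished by the preceding proposition, which we take as input.

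Next, apply Lemma~\ref{lem:mon.prod.redn} with $m_\mu=m_{k,k,k}$ and $m_\lambda=m_{i+j,i}$, substituting the closed forms for $\l -,m_{k,k,k}\r$ from Lemma~\ref{lem:cl.forms.mi.mii.miii} and those for $\l -,m_{i+j,i}\r$ from the preceding proposition. Each of the five identities in the statement then becomes a sum of five products, every product being a parity prefactor times an integer combination of powers of $2$, $3$ and $6$; expanding, collecting like terms, and factoring yields the asserted closed forms. The delicate point --- and the reason the $s_{13}$ and $s_{123}$ formulas carry no single overall parity prefactor --- is that the prefactor $(1\pm(-1)^k)$ inherited from $m_{k,k,k}$ and the prefactor $(1\pm(-1)^j)$ inherited from $m_{i+j,i}$ combine as $(1+(-1)^k)(1+(-1)^j)=1+(-1)^j+(-1)^k+(-1)^{j+k}$, which does not reduce to a multiple of $(1\pm(-1)^{j+k})$: the matching-parity pairs contribute the terms $\tfrac1{24}(1\pm(-1)^{j+k})6^k(\cdots)$, while the mismatched pairs contribute the separate sign-weighted powers of $2$.

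This expand-and-factor step is the only real obstacle, but it is entirely mechanical, of the same type as the Maple computations used elsewhere in the paper. Two checks are worthwhile: specialising $i,j,k$ to small values and comparing with the GAP output (for instance $m_{3,2,1}$ at $i=j=k=1$), and Lemma~\ref{lem:chequerboard}, which --- since $|(k+i+j,k+i,k)|=3k+2i+j\equiv j+k\pmod 2$ --- forces $\l 1,-\r$, $\l s_{12},-\r$, $\l s_{13},-\r$ to vanish when $j+k$ is odd and $\l s_1,-\r$, $\l s_{123},-\r$ to vanish when $j+k$ is even, in agreement with the stated formulas (in the $s_{13}$ and $s_{123}$ cases, the residual powers of $2$ cancel identically in the forbidden parity).
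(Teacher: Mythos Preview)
Your proposal is correct and follows exactly the same route as the paper: the key identity $m_{k+i+j,\,k+i,\,k}=m_{k,k,k}\,m_{i+j,i}$ in three variables, followed by Lemma~\ref{lem:mon.prod.redn} and the closed forms for $m_{k,k,k}$ and $m_{i+j,i}$. Your additional commentary on why the $s_{13}$ and $s_{123}$ formulas lack a single overall parity prefactor, and the suggested consistency checks against Lemma~\ref{lem:chequerboard}, go beyond what the paper records but are accurate and helpful.
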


\begin{proof}
  {For monomial symmetric polynomials of three (non-zero) variables, we have} $m_{i+j+k,i+k,k}=m_{k,k,k}m_{i+j,i}$. {Combining this with}  the formulas for coefficients in products of monomials given in Lemma~\ref{lem:mon.prod.redn} {gives the result}.
\end{proof}

\section{Bases consisting of monomials}\label{sec:bases}

An integral basis for $Z(\Z S_4)$ has a transition matrix to the minimal {(class sum)} basis that is invertible over the integers.  In other words, the determinant of the transition matrix is $\pm 1$. Because of Lemma~\ref{lem:chequerboard}, we can reorder the bases so that the transition matrix is block diagonal, with an ``even'' block and an ``odd'' block.  The even ($3\times 3$) block consists of monomials whose partition is of an even integer, and {therefore} whose class elements have shortest element either 1, $s_{12}$ or $s_{13}$. We refer to these monomials as \emph{even monomials}. The odd ($2\times 2$) block
consists of monomials whose partition is of an odd integer, and whose class elements have shortest element either $s_1$ or $s_{123}$. We refer to these monomials as \emph{odd monomials}. That is, the transition matrix of these reordered bases is of form
\[\bordermatrix{
            &1  &s_{12} &s_{13}&s_{1}&s_{123}\cr
   1 &\ast       &\ast&\ast &   &\cr
  s_{12}    &\ast       &\ast&\ast &   &\cr
  s_{13}    &\ast       &\ast&\ast &   &\cr
  s_{1}     &           &    &     &\ast&\ast\cr
  s_{123}   &           &   &      &\ast&\ast
}.
\]

Some monomials have even coefficients on all class elements, and therefore they cannot be part of a basis (the corresponding column in the transition matrix would have a factor of 2). Using the closed forms for coefficients on class elements obtained above, we can determine which monomials have this property, and rule them out immediately as possible basis elements. We consider first the even monomials:
\begin{itemize}
\item $m_\emptyset$,
\item $m_{i,i}$ for all $i$
\item $m_{i}$ and $m_{i,i,i}$ for $i$ even
\item $m_{i+j,i}$ and $m_{i,i,j}$ for $j$ even and
\item $m_{k+i+j,k+i,k}$ for $j+k$ even.
\end{itemize}

\begin{lem}\label{lem:evenmoncan} The even monomials that have at least one odd coefficient on a class element
are as follows:
\begin{enumerate}
\item $m_\emptyset$
\item $m_{i,i}$ for all $i$
\item $m_2$ from $\{m_i\mid i\text{ even}\}$ \label{mieven}
\item $m_{2,2,2}$ from $\{m_{i,i,i}\mid i\text{ even}\}$
\item $m_{i,i,2}$ from $\{m_{i,i,j}\mid j\text{ even, } j\ne i\}$ \label{miijeven}
\item none from $\{m_{i+j,i}\mid j\text{ even}\}$
\item none from $\{m_{k+i+j,k+i,k}\mid j+k\text{ even}\}$.
\end{enumerate}
\end{lem}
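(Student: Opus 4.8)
The plan is to work through the seven cases of Lemma~\ref{lem:evenmoncan} one at a time, in each case reducing the problem to a parity statement about the closed forms established in Section~\ref{sec:cl.forms.S4}. For each even monomial $m_\mu$ on the list preceding the lemma, I want to decide whether \emph{every} coefficient $\l w_\lambda, m_\mu\r$ (with $w_\lambda$ ranging over the three even conjugacy class representatives $1$, $s_{12}$, $s_{13}$) is even, or whether at least one is odd. The uniform strategy is: take the relevant closed form from Lemma~\ref{lem:cl.forms.mi.mii.miii} or one of the four Propositions, reduce it modulo $2$, and read off the answer. Since powers of even integers ($2^a$, $(-2)^a$, $6^a$) vanish mod $2$ for $a\ge 1$, while $3^a\equiv(-3)^a\equiv(-1)^a\equiv 1\pmod 2$, each closed form collapses to a very short expression mod $2$, typically controlled by the low-order ``constant'' terms and by whether $i$ (or $j$, or $k$) equals $1$ so that a nominally-even power is actually $2^0=1$.

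The cases split as follows. For $m_\emptyset$ all coefficients are those of the identity element, so the single nonzero coefficient is $1$, odd — item (1) is immediate. For $m_{i,i}$ (item (2)), use the closed forms for $\l 1,m_{i,i}\r$, $\l s_{12},m_{i,i}\r$, $\l s_{13},m_{i,i}\r$: modulo $2$ the first becomes $\tfrac1{12}(6^i+3^i+\cdots)\equiv$ something I can check is odd for every $i\ge1$ (for $i=1$ directly from Table~\ref{tab:coeffs.mii}: coefficient of $s_{12}$ is $1$; for $i\ge2$, $\l s_{12},m_{i,i}\r=\tfrac1{12}(6^i+3^i+2^i-(-1)^i)$, and one checks this integer is odd). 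For item (3), among $\{m_i: i\text{ even}\}$ I evaluate $\l 1,m_i\r=\tfrac1{24}(1+(-1)^i)(3^i+10\cdot2^i+23)$ mod $2$: for $i$ even $\ge4$ all of $\l 1,m_i\r,\l s_{12},m_i\r,\l s_{13},m_i\r$ come out even (the $3^i$, $2^i$ and constant terms, after dividing by $24$, conspire to be even), while for $i=2$ Table~\ref{tab:coeffs.mi} shows $\l s_{12},m_2\r=1$, odd — so only $m_2$ survives. Items (4) and (5) are the same computation applied to the $m_{i,i,i}$ and $m_{i,i,j}$ closed forms, isolating $i=2$ (resp. $i=2$, $j$ even) as the sole case with an odd coefficient; the point is that $6^{i-1}$, $2^{i-1}$ etc. are even once $i\ge3$, so only the smallest instance can misbehave, and a table lookup confirms it does. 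For items (6) and (7), I take the closed forms for $\l \cdot, m_{i+j,i}\r$ (with $j$ even) and $\l\cdot,m_{k+i+j,k+i,k}\r$ (with $j+k$ even) and show that \emph{every} coefficient is even: here every summand carries a factor $2^a$ or $6^a$ with $a\ge 1$ (because $i\ge1$, and $j,k\ge1$), except possibly pure powers of $3$ and $(-1)$, and one verifies these cancel in pairs mod $2$ — e.g. in $\l 1,m_{i+j,i}\r$ the terms $3^{i+j}+3^i\equiv 0$ and $22(-1)^i\equiv0$, while $6^i3^j$, $6^i2^j$, $(10+9(-1)^i)(2^{i+j}+2^i)$ are all even since $i\ge1$; the three-part case is the same bookkeeping with more terms.

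The main obstacle is item (7): the closed forms for $m_{k+i+j,k+i,k}$ are long, with many sign-dependent terms, and one must be careful that no combination of small exponents ($i=j=k=1$, etc.) produces an odd value through a hidden $2^0$. The safe approach is to treat the prefactor $(1\pm(-1)^{j+k})$ first — it is either $0$ or $2$, and in the latter case dividing the bracket by $24$ must still yield an even integer — and then to check the boundary instances ($i=1$, $k=1$, $j=2$) directly against the recursion-generated data, the general instances being manifestly even because every monomial in the bracket has a factor of $2$ or of $6$ to a positive power. The remaining cases (1)--(6) are routine parity arithmetic once the closed forms are reduced mod $2$, and the tables cover the finitely many small exponents not handled by the generic argument.
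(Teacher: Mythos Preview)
Your overall plan --- go case by case and use the closed forms from Section~\ref{sec:cl.forms.S4} --- is exactly what the paper does. But the central step of your argument is broken. You write that you will ``reduce [each closed form] modulo $2$'', observing that $2^a$, $(-2)^a$, $6^a$ vanish mod~$2$ and $3^a$, $(-1)^a$ are $1$ mod~$2$. The trouble is that the closed forms are not integer linear combinations of these powers: each one carries a prefactor of $\tfrac{1}{12}$ or $\tfrac{1}{24}$. So showing that the bracketed numerator is even tells you nothing --- the numerator is \emph{always} divisible by $12$ (resp.\ $24$), since the coefficient is an integer, hence always even. What you actually need is the numerator modulo $24$ (resp.\ $48$). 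Concretely, in your treatment of item~(6) you argue that in $\l 1,m_{i+j,i}\r=\tfrac{1}{12}\bigl(6^i3^j+6^i2^j+3^{i+j}+3^i+(10+9(-1)^i)(2^{i+j}+2^i)+22(-1)^i\bigr)$ the bracket is even; that is true but vacuous, and does not show the coefficient itself is even.

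The paper handles this by working modulo $8$ rather than modulo $2$: since the numerator is always divisible by $3$ (an easy separate check), evenness of the coefficient is equivalent to the numerator being $\equiv 0\pmod 8$. For instance in part~(\ref{mieven}) one finds $3^i+2^i-1\equiv 0\pmod 8$ iff $i\ge 4$, so $\l s_{12},m_i\r$ is odd only for $i=2$. Your mod-$2$ shortcut cannot see this distinction. A symptom of the same problem appears already in your item~(2): you assert $\l s_{12},m_{i,i}\r$ is odd for all $i\ge 2$, but in fact it equals $4,\,21,\,116,\,671,\dots$ for $i=2,3,4,5$, so it is even whenever $i$ is even (the odd coefficient for even $i$ is $\l 1,m_{i,i}\r$, not $\l s_{12},m_{i,i}\r$). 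Redo the analysis with the numerators taken modulo $8$ and the cases fall out as stated.
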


\begin{proof}

 Since the proofs are similar, we present only the proofs of parts (\ref{mieven}) and (\ref{miijeven}).

To prove part (\ref{mieven}), we first observe that $\l 1,m_i\r$ is even
if and only if $3^i+10\cdot 2^i+23 \equiv 0 \pmod{8}$ (see Lemma~\ref{lem:cl.forms.mi.mii.miii}).  Since $i$ is even, $3^i\equiv 1\pmod 8$, and $10\cdot 2^i\equiv 0\pmod 8$, this coefficient is always even.

Similarly, the coefficient on $s_{12}$ is even if and only if $3^i+2^i-1\equiv 0\pmod 8$.  This congruence holds if and only if $i\ge 4$, meaning that $m_2$ has odd coefficient on $s_{12}$ but no others in this family do.

 Finally, the coefficient on $s_{13}$ is even if and only if $3^i-2\cdot 2^i-1\equiv 0\pmod 8$.  With $i$ even, we have $3^i\equiv 1\pmod 8$ and $2^{i+1}\equiv 0\pmod 8$. So this coefficient is even for all even $i$.

  Thus, with the exception of $m_2$, all monomials in this family have even coefficients on all class elements, making $m_2$ the only candidate for inclusion in a basis.

For part (\ref{miijeven}), it is easy to see that for $j\ge 4$, the expressions \[6^i+2^i3^j+2^j3^i+9(-1)^i(2^i+2^j)+9\cdot 2^i,\] \[6^i+2^i3^j+2^j3^i \quad \text{ and } \quad  6^i+2^i3^j+2^j3^i-3(-1)^i(2^i+2^j)-3\cdot 2^i\] are all divisible by 8, making the coefficients on class sums all even. For example, \[6^i+2^i3^j+2^j3^i\equiv (-2)^i+2^i(1)+0\equiv 2^i(1+(-1)^i)\equiv 0\pmod 8,\] which accounts for the coefficient of $s_{12}$.

  This leaves the case $j=2$, which means that $m_{i,i,2}$ is our only candidate for a basis element from this family. In this case, we have that $\l s_{12}, m_{i,i,2}\r$ is odd since $6^i+2^i3^2j+2^23^i\equiv 4\pmod 8,$ while all other coefficients are even.

\end{proof}

We have the following table of candidate even monomials given in Lemma~ \ref{lem:evenmoncan}:

\medskip
\begin{tabular}{c|cccccc}
&$\emptyset$&$(2)$&$(2,2,2)$&$(2i,2i)$&$(2i+1,2i+1)$&$(i,i,2)$\\
\hline
1&1&6&6&odd&even&even\\
$s_{12}$&0&1&3&even&odd&odd\\
$s_{13}$&0&0&2&even&odd&even
\end{tabular}
\medskip

The parities shown in the table are direct consequences of the closed forms obtained earlier.

Since these columns must form columns of a matrix whose determinant is $\pm 1$, we must have a monomial of form $m_{2i+1,2i+1}$ in a basis (see the coefficients of $s_{13}$).  Looking at the coefficients of the identity, we must also have either $m_\emptyset$ or one of form $m_{2i,2i}$.  Each of these three has the same parity on both $s_{12}$ and $s_{13}$, so we must include a monomial that has different parity on these two: either $m_{2}, m_{2,2,2}$ or $m_{i,i,2}$.  Thus, we have six possible (families of) sets of monomials given below that could form a basis. For each of these cases, we examine the determinant of the $3\times 3$ matrix to ascertain which have a determinant of $\pm 1$.

\begin{itemize}
  \item $\{m_{2i+1,2i+1},m_\emptyset,m_{2}\}$\\
  The determinant of this transition matrix is given by \[\frac{1}{12}\left(6^{2i+1}+3^{2i+1}-(2+3(-1)^{2i+1})2^{2i+1}-(-1)^{2i+1}\right).\]  This determinant is exactly 1 when $i=0$ and is increasing with $i$. Thus, $i=0$ gives the only spanning subset here, namely $\{m_\emptyset,m_{1,1},m_2\}$.
  \item $\{m_{2i+1,2i+1},m_\emptyset,m_{2,2,2}\}$\\
  The determinant here is \[\frac{1}{12}\left(6^{2i+1}+3^{2i+1}-(8+9(-1)^{2i+1})2^{2i+1}-(-1)^{2i+1}\right),\] which is 1 when $i=0$ and increasing with $i$.  Thus, $\{m_\emptyset,m_{1,1},m_{2,2,2}\}$ is the only spanning set from this family.
  \item $\{m_{2i+1,2i+1},m_\emptyset,m_{j,j,2}\}$\\
  The determinant simplifies to
\[-\frac{1}{48}\left(6^{2i+1}+3^{2i+1}+2^{2i+1}+1\right)\left[\left(1+(-1)^j\right)2^j+4(-1)^j\right]\]
When $j$ is even, we have that $\left(1+(-1)^j\right)2^j+4(-1)^j>4$. Then, since $6^{2i+1}+3^{2i+1}+2^{2i+1}+1\ge 12$, there are no values of $i$ and $j$ that give determinant $\pm 1$. When $j$ is odd, the determinant is 1 if and only if $i=0$.  Thus the set $\{m_\emptyset,m_{1,1},m_{j,j,2}\mid j\text{ odd}\}$ gives all sets of this form that span the class elements here.
  \item $\{m_{2i+1,2i+1},m_{2j,2j},m_2\}$\\
  The determinant simplifies to \[\frac{1}{12}\left(\left(2^{2j}-1\right)\left(6^{2i+1}+3^{2i+1}+2^{2i+1}\right) +6\cdot 2^{2j} -6^{2j}-3^{2j}\right).\] We claim that this determinant is never equal to $-1$, and is equal to $1$ only when $i=0$ and $j=1$. To see this, we rewrite the determinant, replacing $2j$ with {$w+3$}, and set it equal to $\pm 1$. Multiplying by 12 produces the following two exponential
Diophantine equations:
\begin{equation}\label{Dio1}
\left(2^{w+3}-1\right)\left(6^i+3^i+2^i\right) +6\cdot 2^{w+3} -6^{w+3}-3^{w+3}=\pm 12.
\end{equation}
In the case when the right-hand side of (\ref{Dio1}) is $-12$, the equation has no solutions modulo 28. When the right-hand side of (\ref{Dio1}) is $12$, we see that there are no solutions by reducing the equation modulo 1971. All calculations were done by computer. Thus, the equations in (\ref{Dio1}) have no solutions for any integers $i\ge 1$ and $w\ge 0$. This implies that the original determinant can only be $\pm 1$ when $j=1$. Substituting $j=1$ into the original determinant shows that $i$ must be {0}.
Hence, the only spanning set here is $\{m_{1,1},m_{2,2},m_{2}\}$.
    \item $\{m_{2i+1,2i+1},m_{2j,2j},m_{2,2,2}\}$\\
    The determinant simplifies to \[\frac{1}{12}\left(17\cdot 2^{2j}-6^{2i+1}-6^{2j}-3^{2i+1}-3^{2j}-2^{2i+1}\right),\] which is less than $-1$ when $j>1$ or $i>0$. The only solution is $i=0$ and $j=1$, giving the set of monomials $\{m_{1,1},m_{2,2},m_{2,2,2}\}$.
  \item $\{m_{2i+1,2i+1},m_{2j,2j},m_{k,k,2}\}$\\
  The determinant simplifies to
\begin{multline*}\frac{1}{48} \left(2^{2j+3}3^k+2^{2j+1}6^k\right.\\
-\left(1+(-1)^k\right)\left(2^{2i+k+1}+2^k6^{2i+1}+2^k3^{2j}+2^k3^{2i+1}
+2^k6^{2j}\right)\\
\left.-4\left(-1\right)^k\left(6^{2i+1}+3^{2i+1}+2^{2i+1}+6^{2j}+3^{2j}+2^{2j}\right)
+\left(17-(-1)^k\right)2^{2j+k}\right).
\end{multline*}
As before, we require that this determinant be $\pm 1$. So, we need to solve the two exponential Diophantine equations:
\begin{multline}\label{Dio2}
2^{2j+3}3^k+2^{2j+1}6^k\\ -\left(1+(-1)^k\right)\left(2^{2i+k+1}+2^k6^{2i+1}+2^k3^{2j}+2^k3^{2i+1}
+2^k6^{2j}\right)\\
-4\left(-1\right)^k\left(6^{2i+1}+3^{2i+1}+2^{2i+1}+6^{2j}+3^{2j}+2^{2j}\right)\\
+\left(17-(-1)^k\right)2^{2j+k}=\pm 48
\end{multline}
 Reducing (\ref{Dio2}) modulo 45 shows that there are no solutions when the right-hand side is $-48$, while reduction modulo 1197 proves that there are no solutions to (\ref{Dio2}) when the right-hand side is $48$. Hence, no spanning set arises in this situation.
\end{itemize}

We summarize the above computations on the even monomials in the following lemma.

\begin{lem}\label{lem:evenblock}
The {sets of even monomial symmetric polynomials of $L_1,\dots,L_4$ spanning $\{\underline C_\emptyset, \underline C_{1,1}, \underline C_2\}$ are:} 
\begin{itemize}
  \item $\{m_\emptyset,m_{1,1},m_2\}$
  \item $\{m_\emptyset,m_{1,1},m_{2,2,2}\}$
  \item $\{m_\emptyset,m_{1,1},m_{i,i,2}\mid i\text{ odd}\}$
  \item $\{m_{1,1},m_{2,2},m_{2}\}$
  \item $\{m_{1,1},m_{2,2},m_{2,2,2}\}$
\end{itemize}
\end{lem}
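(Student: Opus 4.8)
The plan is to enumerate, among all even monomials that have at least one odd coefficient (Lemma~\ref{lem:evenmoncan}), exactly those subsets of size three whose $3\times 3$ transition matrix against the class-sum basis $\{\underline C_\emptyset,\underline C_{1,1},\underline C_2\}$ has determinant $\pm 1$. The first step is to reduce the search space using the parity table: from the closed forms we read off that the columns indexed by $\emptyset$, $(2)$, $(2,2,2)$, $(2i,2i)$, $(2i+1,2i+1)$ and $(i,i,2)$ have the parities shown, and a determinant $\pm1$ forces (a) a monomial of type $m_{2i+1,2i+1}$ (the only family with odd $s_{13}$-coefficient), (b) one of $m_\emptyset$ or $m_{2j,2j}$ (to get the odd entry in the identity row, noting $m_{2i+1,2i+1}$ is even there), and (c) a monomial distinguishing the parities on $s_{12}$ versus $s_{13}$, i.e.\ one of $m_2$, $m_{2,2,2}$, $m_{i,i,2}$. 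This leaves precisely the six families of candidate triples already listed.

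Next, for each of the six families I would write the determinant of the $3\times3$ transition matrix explicitly, substituting the closed forms from Lemma~\ref{lem:cl.forms.mi.mii.miii} and the Propositions on $m_{i+j,i}$, $m_{i,i,j}$. The parametrised determinants simplify (using $(-1)^{2i+1}=-1$, etc.) to one of the displayed expressions in the six bulleted cases. For the three ``easy'' families --- $\{m_{2i+1,2i+1},m_\emptyset,m_2\}$, $\{m_{2i+1,2i+1},m_\emptyset,m_{2,2,2}\}$, $\{m_{2i+1,2i+1},m_{2j,2j},m_{2,2,2}\}$ --- the determinant is a manifestly monotone (increasing, resp.\ decreasing past the boundary) function of the parameters, so one checks directly that it equals $\pm1$ only at the smallest parameter values, yielding $\{m_\emptyset,m_{1,1},m_2\}$, $\{m_\emptyset,m_{1,1},m_{2,2,2}\}$ and $\{m_{1,1},m_{2,2},m_{2,2,2}\}$ respectively. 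For the family $\{m_{2i+1,2i+1},m_\emptyset,m_{j,j,2}\}$ the determinant factors as a product of $-\tfrac{1}{48}(6^{2i+1}+3^{2i+1}+2^{2i+1}+1)$ and $[(1+(-1)^j)2^j+4(-1)^j]$; the first factor is $\ge 12$ in absolute value, so for $j$ even (second factor $>4$) there is no solution, and for $j$ odd the product is $\pm1$ only when $i=0$, giving $\{m_\emptyset,m_{1,1},m_{j,j,2}\mid j\text{ odd}\}$.

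The genuinely hard cases are the two remaining families, $\{m_{2i+1,2i+1},m_{2j,2j},m_2\}$ and $\{m_{2i+1,2i+1},m_{2j,2j},m_{k,k,2}\}$, where the determinant mixes the bases $2,3,6$ in a way that is not monotone, so I cannot rule out accidental small values by size estimates alone. The approach here is to set the (cleared-denominator) determinant equal to $+N$ and to $-N$ separately (with $N=12$, resp.\ $48$) and treat each as an exponential Diophantine equation. After the substitution $2j=w+3$ the first family becomes equation~\eqref{Dio1}; one shows the $-12$ case is impossible modulo $28$ and the $+12$ case impossible modulo $1971$, which forces $j=1$ and then $i=0$, giving $\{m_{1,1},m_{2,2},m_2\}$. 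The second family is equation~\eqref{Dio2}: the $-48$ case dies modulo $45$ and the $+48$ case modulo $1197$, so no basis arises. I expect the \textbf{main obstacle} to be precisely finding the right small moduli that kill these exponential equations --- the moduli must be chosen so that $2,3,6$ have short multiplicative orders and the residues of the two sides provably never coincide, which is done by computer search; everything else is routine verification of base cases from Tables~\ref{tab:coeffs.mi}--\ref{tab:coeffs.miii} and monotonicity estimates. Collecting the surviving triples from all six families yields exactly the five sets in the statement.
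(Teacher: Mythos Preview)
Your proposal is correct and follows essentially the same approach as the paper: the parity reduction to six candidate families, the explicit determinant computations using the closed forms, the monotonicity arguments for the easy families, and the exponential Diophantine equations \eqref{Dio1} and \eqref{Dio2} handled by the same moduli ($28$, $1971$, $45$, $1197$) all match the paper's argument exactly. The only remark is that the paper presents these computations \emph{before} stating the lemma (which then merely summarizes them), whereas you have framed the same content as a proof sketch after the fact.
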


We now turn our attention to the odd monomials.

\begin{lem}\label{lem:oddmoncan} The odd {monomial symmetric polynomials of $L_1,\dots,L_4$} that have at least one odd coefficient on a class element
are:
\begin{enumerate}
\item $\{m_i\mid i\text{ odd}\}$ \label{miodd}
\item $m_{1,1,1}$ from $\{m_{i,i,i}\mid i\text{ odd}\}$
\item $m_{i,i,1}$ from $\{m_{i,i,j}\mid j\text{ odd, } j\ne i\}$ \label{miijodd}
\item $\{m_{i+j,i}\mid j\text{ odd}\}$
\item none from $\{m_{k+i+j,k+i,k}\mid j+k\text{ odd}\}$.
\end{enumerate}
\end{lem}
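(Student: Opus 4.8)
The plan is to run the same argument as in the proof of Lemma~\ref{lem:evenmoncan}, but now tracking only the two class sums that can occur in an odd monomial, namely $\underline{C_{(1)}}$ (shortest element $s_1$) and $\underline{C_{(3)}}$ (shortest element $s_{123}$). For each family I would substitute the closed forms of Lemma~\ref{lem:cl.forms.mi.mii.miii} and of the three preceding Propositions of Section~\ref{sec:cl.forms.S4}. After clearing the fixed denominator, each coefficient becomes $N/d$ with $N\in\Z$ and $d$ a fixed power of $2$ times a power of $3$; since the coefficient is automatically an integer we have $d\mid N$, so the coefficient is odd exactly when the single further divisibility $2d\mid N$ fails. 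The whole verification therefore reduces to reductions of explicit exponential expressions modulo $8$ (or, in one case arising in part (5), modulo $16$), using the standard facts that $3^i\equiv 1\pmod 8$ for $i$ even, $3^i\equiv 3\pmod 8$ for $i$ odd, and $2^i\equiv 0\pmod 8$ for $i\ge 3$.

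For the two families that should contain \emph{only} odd-coefficient monomials --- part (\ref{miodd}), $\{m_i\mid i\text{ odd}\}$, and part (4), $\{m_{i+j,i}\mid j\text{ odd}\}$ --- I would show that the coefficient on $s_1$ is always odd: substituting the closed forms and reducing modulo $8$ (splitting off the finitely many small cases $i\le 2$ and $j\le 2$), one finds in every instance that the relevant numerator is $\equiv 4\pmod 8$. For part (2) the exceptional monomial is handled by hand: $\l s_{123},m_{1,1,1}\r=1$ directly from Lemma~\ref{lem:cl.forms.mi.mii.miii}, while for odd $i\ge 3$ both $\l s_1,m_{i,i,i}\r$ and $\l s_{123},m_{i,i,i}\r$ simplify to $2^{i-2}(3^{i-1}\mp 1)$, which is divisible by $2^{i-1}\ge 4$ and hence even; so $m_{1,1,1}$ is the only survivor in that family.

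The harder direction is parts (\ref{miijodd}) and (5), where one must show that all coefficients are even except, in part (\ref{miijodd}), for $m_{i,i,1}$. For part (\ref{miijodd}) I would fix $j$ odd with $j\ne i$ and split into $j\ge 3$ and $j=1$: when $j\ge 3$ the terms $6^i,2^i,2^j,3^j$ in both numerators become $\equiv 0\pmod 8$ as soon as their exponents are $\ge 3$, so after also disposing of $i\in\{1,2\}$ one gets $8\mid N$ for both $\l s_1,m_{i,i,j}\r$ and $\l s_{123},m_{i,i,j}\r$; when $j=1$ (which forces $i\ge 2$) one checks, splitting on the parity of $i$, that $\l s_{123},m_{i,i,1}\r$ is odd, so $m_{i,i,1}$ does contribute. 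Part (5) is the step I expect to be the main obstacle: the closed forms for $\l s_1,m_{k+i+j,k+i,k}\r$ and $\l s_{123},m_{k+i+j,k+i,k}\r$ each carry four terms whose coefficients are alternating sums of $(-1)^i,(-1)^j,(-1)^k$ and their products, and a crude estimate of $2$-adic valuations is not quite sufficient --- for instance when $k=1$ the term $6^k(3^j6^i+\cdots)$ and the $2^k$-term have $2$-adic valuation exactly $2$ (resp.\ $3$ for the $s_{123}$-coefficient), and one must see that they cancel modulo $8$ (resp.\ $16$). I would therefore organise the check by the parities of $i,j,k$ subject to $j+k$ odd, and by whether $k=1$, $k=2$, or $k\ge 3$, in each sub-case exhibiting the required divisibility; this is precisely the phenomenon behind part (7) of Lemma~\ref{lem:evenmoncan}, omitted there as ``similar''. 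Once all the parities are settled the lemma follows, and, as in the even case, it is then convenient to record the resulting parity table for the surviving candidates $m_i$ ($i$ odd), $m_{1,1,1}$, $m_{i,i,1}$, and $m_{i+j,i}$ ($j$ odd), which feeds directly into the analysis of the odd $2\times 2$ block.
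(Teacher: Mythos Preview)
Your proposal is correct and follows exactly the paper's approach: the paper's proof is a two-line sketch (``similar to Lemma~\ref{lem:evenmoncan}'') with a single worked example for part~(\ref{miodd}), checking $3^i+4\cdot 2^i+1\equiv 4\pmod 8$ and $3^i-2\cdot 2^i+1\pmod 8$, which is precisely the reduction-modulo-$8$ of the closed-form numerators you describe. Your write-up simply fills in the details the paper omits, including the more delicate case analysis for part~(5).
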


\begin{proof}
  The proofs are similar to the proofs given of Lemma \ref{lem:evenmoncan}. For example, for part (\ref{miodd}), observe that $3^i+4\cdot 2^i+1 \equiv 4 \pmod{8},$ which shows that $\l s_1,m_{i}\r$ is odd. Also, $3^i-2\cdot 2^i+1 \equiv 4 \pmod{8}$ when $i\ge 3$, and $3^i-2\cdot 2^i+1$ is divisible by 8 when $i=1$, which shows that $\l s_{123},m_{i}\r$ is even only when $i=1$.
\end{proof}

A consequence of Lemma \ref{lem:oddmoncan} is that the parities of the candidate odd monomials are:

\medskip
\begin{tabular}{c|ccccc}
&(1)&$(1,1,1)$&$(i,i,1)$&$(i)$&$(i+j,i)$\\
&&&$i\ge 3$& $i\ge 3$&\\
\hline
$s_{1}$&1&0&even&odd&odd\\
$s_{123}$&0&1&odd&odd&odd
\end{tabular}
\medskip

From this table we see that there are nine possibilities where the determinant of the $2 \times 2$ odd block is odd. We show below which of these possibilities actually yield a determinant equal to $\pm 1$. In certain cases, Maple was used in the computations.

\begin{itemize}
  \item $\{m_1,m_{1,1,1}\}$\\
  This is clearly a spanning set for the odd class elements.
    \item $\{m_i,m_{1,1,1}\}$ with $i\ge 3$\\
  Since we require that the determinant be $\pm 1$, we derive the following equations:
\[3^i+4\cdot 2^i+1=\pm 12.\]
It is easy to see that there is no solution since $i\ge 3$.
\item $\{m_{i+j,i}$, $m_{1,1,1}\}$\\
  Since $j$ is odd,
  the following equations are derived from requiring that the determinant be equal to $\pm 1$:
\[3^j6^i+2^j6^i+3^{i+j}+3^i+\left(4+3(-1)^i\right)2^{i+j}
+\left(4-3(-1)^i\right)2^i=\pm 12.\]
 There are no solutions since the left-hand side is easily seen to be larger than 12.
\item $\{m_1,m_{i,i,1}\}$ with $i\ge 3$\\
The determinant here is
\[\frac{1}{12}\left(6^i+2^i\cdot 3+2\cdot 3^i+3\cdot (2^i+2)+3\cdot 2^i\right),\]
which is clearly larger than 1. So, there are no solutions in this situation.
\item $\{m_1,m_{i}\}$ with $i\ge 3$\\
Requiring that the determinant be $\pm 1$ produces the equations:
\[3^{i}-2\cdot 2^{i}+1=\pm 12.\] It is easy to see that there is the single solution $i=3$. Thus, the odd block can be $\{m_1, m_3\}.$
\item $\{m_1,m_{i+j,i}\}$\\
Setting the determinant equal to $\pm 1$ gives the two equations
\[3^j6^i+2^j6^i+3^i+3^{i+j}+3(-1)^i(2^i-2^{i+j})-2(2^i+2^{i+j})=\pm 12.\] Reduction modulo 5 shows that the left-hand side is congruent to 0, 1 or 4, while the right-hand side is congruent to 2 or 3. Thus, there are no solutions here.
\item $\{m_i$, $m_{j,j,1}\}$ with $i\ge 3$\\
Setting the determinant equal to $\pm 1$ leads to two exponential Diophantine equations: one with $-48$ on the right-hand side, and one with $48$ on the right-hand side. The $-48$--equation has no solutions mod 819, while the $48$--equation has no solutions mod 5.
\item $\{m_{j+k,j}$, $m_{i,i,1}\}$\\
As above we get two exponential Diophantine equations by equating the determinant to $\pm 1$. The $-48$--equation has no solutions mod 45, while the $48$--equation has no solutions mod 85.
\item $\{m_{i,i,1},m_{j,j,1}\}$ with $i,j\ge 3$\\
In this situation, we arrive at two exponential Diophantine equations: one with $-144$ on the right-hand side, and one with $144$ on the right-hand side. Reduction modulo 91 shows that there are no solutions in either case since the left-hand side is congruent to 0 or 16 while the right-hand side is congruent to 38 or 53.
\end{itemize}

We summarize the above computations on the odd monomials in the following lemma.

\begin{lem}\label{lem:oddblock}
The {sets of odd monomial symmetric polynomials of $L_1,\dots,L_4$ that span $\{\underline C_1,\underline C_3\}$} are:
\begin{itemize}
  \item $\{m_1,m_{1,1,1}\}$
  \item $\{m_1, m_3\}.$
  \end{itemize}
\end{lem}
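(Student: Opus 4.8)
The plan is to reduce the statement to a finite determinant computation by exploiting the block structure of the transition matrix together with the parity constraints already established. First I would recall that, by Lemma~\ref{lem:chequerboard}, for any set of monomial symmetric polynomials in $L_1,\dots,L_4$ the transition matrix to the class-sum basis splits as a direct sum of a $3\times3$ ``even'' block and a $2\times2$ ``odd'' block, the odd block being the one built from the monomials whose partition is of an odd integer, i.e.\ whose class sums are $\underline C_1$ and $\underline C_3$. A pair of odd monomials spans $\{\underline C_1,\underline C_3\}$ over $\Z$ precisely when the $2\times2$ matrix of their coefficients on $s_1$ and $s_{123}$ has determinant $\pm1$; in particular that determinant must be odd.

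Next I would use Lemma~\ref{lem:oddmoncan} to cut down the candidates: the only odd monomials with some odd coefficient are the families $\{m_i:i\text{ odd}\}$, the single monomial $m_{1,1,1}$, the family $\{m_{i,i,1}:i\ge3\}$, and the family $\{m_{i+j,i}:j\text{ odd}\}$, any other odd monomial contributing an all-even column that cannot occur in a basis. Reading off the parity table following Lemma~\ref{lem:oddmoncan}, one finds that only nine (families of) unordered pairs from these candidates can give a $2\times2$ block with odd determinant, so it suffices to settle these nine cases.

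For each case I would substitute the relevant closed forms from Lemma~\ref{lem:cl.forms.mi.mii.miii} and from the propositions giving $\l -,m_{i,i,j}\r$ and $\l -,m_{i+j,i}\r$, expand the $2\times2$ determinant, and set it equal to $\pm1$. Five cases close quickly: $\{m_1,m_{1,1,1}\}$ has determinant $\pm1$ outright; $\{m_i,m_{1,1,1}\}$, $\{m_{i+j,i},m_{1,1,1}\}$ and $\{m_1,m_{i,i,1}\}$ each have one side manifestly larger in absolute value than the other; and $\{m_1,m_i\}$ reduces to $3^i-2\cdot2^i+1=\pm12$, whose only solution with $i\ge3$ is $i=3$. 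The remaining four cases — $\{m_1,m_{i+j,i}\}$, $\{m_i,m_{j,j,1}\}$, $\{m_{j+k,j},m_{i,i,1}\}$ and $\{m_{i,i,1},m_{j,j,1}\}$ — produce genuine multivariate exponential Diophantine equations, one for each choice of sign. Here the plan is to rule out solutions by exhibiting, for each sign, a modulus (such as $5$, $45$, $85$, $91$ or $819$) for which the set of residues attained by the left-hand side misses the residue of the right-hand side; since $2^a,3^a,6^a$ are eventually periodic modulo any fixed integer, such a search is a finite computer check.

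I expect the modular-exclusion step in those last four cases to be the main obstacle: the moduli must be chosen so that the exponentials involved have manageable periods and so that the attainable residues genuinely avoid the target value, and locating them in practice requires computer assistance rather than a clean hand argument. Once all nine cases are resolved the only surviving pairs are $\{m_1,m_{1,1,1}\}$ and $\{m_1,m_3\}$, which is exactly the assertion of the lemma.
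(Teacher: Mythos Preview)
Your proposal is correct and follows essentially the same route as the paper: it too restricts to the nine candidate pairs using Lemma~\ref{lem:oddmoncan} and the parity table, dispatches the first five by direct size or equality arguments (including the isolated solution $i=3$ for $\{m_1,m_i\}$), and eliminates the remaining four via congruence obstructions, using precisely the moduli $5$, $45$, $85$, $91$ and $819$ that you name.
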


 Lemma \ref{lem:evenblock} and Lemma \ref{lem:oddblock} determine all bases for $Z(\Z S_4)$ which consist solely of monomial symmetric polynomials in {Jucys--Murphy} elements. We get eight specific bases and two infinite families of bases.
We state this main result in {the following Theorem}.

\begin{thm}\label{thm:S4bases}
 The complete list of bases for $Z(\Z S_4)$ which consist solely of monomial symmetric polynomials in {Jucys--Murphy} elements is:
\begin{itemize}
  \item $\left\{m_{\emptyset}, m_1, m_2, m_{1,1}, m_{1,1,1}\right\}$
  \item $\left\{m_{\emptyset}, m_1, m_{1,1}, m_{1,1,1}, m_{2,2,2}\right\}$
  \item $\left\{m_1, m_2, m_{1,1}, m_{1,1,1}, m_{2,2}\right\}$
  \item $\left\{m_1, m_{1,1}, m_{1,1,1}, m_{2,2}, m_{2,2,2}\right\}$
  \item $\left\{m_{\emptyset}, m_1, m_2, m_{1,1}, m_{3}\right\}$ \qquad {(Murphy's basis~\cite{Mur83})}
  \item $\left\{m_{\emptyset}, m_1, m_{1,1}, m_{2,2,2}, m_{3}\right\}$
  \item $\left\{m_1, m_2, m_{1,1}, m_{3}, m_{2,2}\right\}$
  \item $\left\{m_1, m_{1,1}, m_{3}, m_{2,2}, m_{2,2,2}\right\}$
  \item $\left\{m_{\emptyset}, m_1, m_{1,1}, m_{1,1,1}, m_{i,i,2}\mid i \text{ odd }\right\}$
\item $\left\{m_{\emptyset}, m_1, m_{1,1}, m_{3}, m_{i,i,2}\mid i \text{ odd }\right\}.$  \end{itemize}

\end{thm}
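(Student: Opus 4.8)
The plan is to reduce the classification to two small, independent linear-algebra problems by exploiting the parity dichotomy of Lemma~\ref{lem:chequerboard}. Recall that $Z(\Z S_4)$ has rank $5$, with class-sum basis $\{\underline{C_\emptyset},\underline{C_{(1)}},\underline{C_{(1,1)}},\underline{C_{(2)}},\underline{C_{(3)}}\}$ indexed by the modified cycle types of $S_4$; here $\underline{C_\emptyset}$, $\underline{C_{(1,1)}}$, $\underline{C_{(2)}}$ are the classes whose increasing elements have even shape (shortest elements $1$, $s_{13}$, $s_{12}$) and $\underline{C_{(1)}}$, $\underline{C_{(3)}}$ the two of odd shape (shortest elements $s_1$, $s_{123}$). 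Each monomial symmetric polynomial $m_\mu(L_1,\dots,L_4)$ is \emph{even} or \emph{odd} according to the parity of $|\mu|$, and by Lemma~\ref{lem:chequerboard} an even $m_\mu$ is a $\Z$-combination of $\underline{C_\emptyset}$, $\underline{C_{(1,1)}}$, $\underline{C_{(2)}}$ alone, while an odd $m_\mu$ is a combination of $\underline{C_{(1)}}$, $\underline{C_{(3)}}$ alone. Ordering the class sums and any candidate family of monomials so that the even ones come first, the transition matrix therefore takes the block-diagonal shape displayed near the start of Section~\ref{sec:bases}, with a $3\times 3$ ``even block'' $A$ and a $2\times 2$ ``odd block'' $C$.

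Next I would show that such a family is an integral basis of $Z(\Z S_4)$ exactly when $A$ and $C$ are separately unimodular. Such a family has five elements, since $Z(\Z S_4)$ has rank $5$, and it is an integral basis iff its transition matrix $T$ to the class-sum basis satisfies $\det T=\pm 1$. Because each even-monomial column of $T$ is supported on the three even rows and each odd-monomial column on the two odd rows, linear independence of the columns forces the family to contain exactly three even and exactly two odd monomials; with the ordering above $T=\mathrm{diag}(A,C)$ up to a fixed sign, so $\det T=\pm\det A\cdot\det C$. As $\det A$ and $\det C$ are integers, $\det T=\pm1$ if and only if $\det A=\pm1$ and $\det C=\pm1$. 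Hence the integral monomial bases are precisely the (automatically disjoint) unions $E\cup O$, where $E$ is a three-element set of even monomials whose even block has determinant $\pm1$ and $O$ is a two-element set of odd monomials whose odd block has determinant $\pm1$, and the two requirements are completely independent of each other.

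It then remains only to put the two classifications together. Lemma~\ref{lem:evenblock} gives the admissible $E$: the four sets $\{m_\emptyset,m_{1,1},m_2\}$, $\{m_\emptyset,m_{1,1},m_{2,2,2}\}$, $\{m_{1,1},m_{2,2},m_2\}$, $\{m_{1,1},m_{2,2},m_{2,2,2}\}$, and the infinite family $\{m_\emptyset,m_{1,1},m_{i,i,2}\}$ with $i$ odd. Lemma~\ref{lem:oddblock} gives the admissible $O$: the two sets $\{m_1,m_{1,1,1}\}$ and $\{m_1,m_3\}$. Forming all $5\times 2=10$ unions $E\cup O$ produces exactly the ten items of Theorem~\ref{thm:S4bases}: eight specific bases --- among them Murphy's basis~\cite{Mur83} $\{m_\emptyset,m_1,m_2,m_{1,1},m_3\}$, which is $\{m_\emptyset,m_{1,1},m_2\}\cup\{m_1,m_3\}$ --- together with the two infinite families, obtained by pairing the infinite even family with each of the two odd sets. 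Conversely every monomial basis arises in this way, so the list is exhaustive.

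Given Lemmas~\ref{lem:chequerboard}, \ref{lem:evenblock} and \ref{lem:oddblock}, the theorem is thus just bookkeeping; the main obstacle is proving those two block lemmas. This in turn has two parts. First one isolates, using the closed forms of Section~\ref{sec:cl.forms.S4} reduced modulo $8$, exactly which monomials have all coefficients even and hence cannot belong to any basis (Lemmas~\ref{lem:evenmoncan} and \ref{lem:oddmoncan}). Then, for each remaining family of candidate triples (respectively pairs), one evaluates the $3\times 3$ (respectively $2\times 2$) determinant as an explicit exponential expression in the parameters and determines when it equals $\pm1$. This last step is the genuinely hard one: it reduces to showing that exponential Diophantine equations such as \eqref{Dio1}, and especially the two-parameter equation \eqref{Dio2}, have no solutions --- which is established by reducing modulo well-chosen integers (for instance $45$ and $1197$ in the case of \eqref{Dio2}) and is where machine computation becomes essential.
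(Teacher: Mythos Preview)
Your proposal is correct and follows essentially the same approach as the paper: the paper likewise deduces the theorem directly from Lemmas~\ref{lem:evenblock} and~\ref{lem:oddblock}, observing that the parity block-diagonalisation reduces the problem to pairing each admissible even triple with each admissible odd pair. Your write-up makes the block-diagonal reduction and the ``exactly three even, exactly two odd'' count more explicit than the paper does, but the strategy and the ingredients are the same.
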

As mentioned in the Introduction, any integral basis for $Z(\H_n)$ specializes at $\xi=0$ to an integral basis for $Z(\Z S_n)$. So, in particular, to find all integral bases for $Z(\H_4)$ which consist solely of monomial symmetric polynomials in {Jucys--Murphy} elements, it is sufficient to check the sets of monomials in $\H_4$ corresponding to the bases given in Theorem \ref{thm:S4bases}. Checking the infinite families for {$i< 50$}, and the remaining sporadic bases gives the following list of bases for $Z(\H_4)$:
\begin{align*}
&\{m_\emptyset,m_1,m_2,m_{1,1},m_{1,1,1}\}\\
&\{m_\emptyset,m_1,m_{1,1},m_{1,1,1}, m_{2,1,1}\}\\
&\{m_\emptyset,m_1,m_{1,1},m_{1,1,1}, m_{2,2,2}\}.\\
\end{align*}

\section{{Summary}, Generalizations {and Conjectures} }\label{sec:H4+conjs}

Using GAP to search up through partitions of {10}, we have found the following 12 monomial bases for {$Z(\Z S_5)$}:

\begin{align*}
&\{m_\emptyset,m_1,m_2,m_{1,1},m_{3},m_{2,1},m_{4}\}\\
&\{m_\emptyset,m_1,m_2,m_{1,1},m_{3},m_{2,1},m_{1,1,1,1}\}\\
&\{m_\emptyset,m_1,m_2,m_{1,1},m_{3},m_{1,1,1},m_{4}\}\\
&\{m_\emptyset,m_1,m_2,m_{1,1},m_{3},m_{1,1,1},m_{1,1,1,1}\}\\
&\{m_\emptyset,m_1,m_2,m_{1,1},m_{2,1},m_{4},m_{5}\}\\
&\{m_\emptyset,m_1,m_2,m_{1,1},m_{2,1},m_{1,1,1,1},m_{5}\}\\
&\{m_\emptyset,m_1,m_2,m_{1,1},m_{1,1,1},m_{4},m_{3,1,1}\}\\
&\{m_\emptyset,m_1,m_2,m_{1,1},m_{1,1,1},m_{1,1,1,1},m_{3,1,1}\}\\
&\{m_\emptyset,m_1,m_{1,1},m_{3},m_{2,1},m_{2,1,1},m_{1,1,1,1}\}\\
&\{m_\emptyset,m_1,m_{1,1},m_{3},m_{1,1,1},m_{2,1,1},m_{1,1,1,1}\}\\
&\{m_\emptyset,m_1,m_{1,1},m_{2,1},m_{2,1,1},m_{1,1,1,1},m_{5}\}\\
&\{m_\emptyset,m_1,m_{1,1},m_{1,1,1},m_{2,1,1},m_{1,1,1,1},m_{3,1,1}\}.\\
\end{align*}
None of these sets of monomials in $\H_5$ is a basis for $Z(\H_5$).
The following table summarizes what is currently known regarding monomial bases for $Z(\Z S_n)$ and $Z(\H_n)$ when $n=3,4,5$.\\

\begin{tabular}{c|l|l}
Algebra & Number of Monomial Bases& Reference\\ \hline
$Z(\Z S_3)$ & 4 & \cite{FJ:newintbasis}\\
$Z(\H_3)$ & 1 & \cite{FJ:newintbasis}\\
$Z(\Z S_4)$ & 8 + two infinite families & Theorem~\ref{thm:S4bases} \\
$Z(\H_4)$ & 3 known & end of Section \ref{sec:bases}\\
$Z(\Z S_5)$ & 12 known & see above\\
$Z(\H_5)$ & none known & checked the 12 known\\&& bases for $Z(\Z S_5)$\\
\end{tabular}\\

\begin{conjs} We conjecture the following: 
  \begin{enumerate}
  \item There are only 12 monomial bases for $Z(\Z S_5)$.
  \item When $n\ge 5$, there are only finitely many monomial bases for $Z(\Z S_n)$.
  \item {There are only 3 monomial bases for $Z(\H_4)$.}
  \item When $n\ge 5$, there are no monomial bases for $Z(\H_n)$.
  \end{enumerate}
\end{conjs}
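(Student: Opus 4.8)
The four items divide into two finitary statements, (1) and (3), and two asymptotic statements, (2) and (4); I would attack them in that order, since the asymptotic ones need genuinely new input beyond the method of Sections~\ref{sec:cl.forms.S4}--\ref{sec:bases}.

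For (1) the plan is to run the $S_4$ machine one rank higher. The conjugacy classes of $S_5$ are indexed by the seven modified cycle types $\mu$ with $|\mu|+\ell(\mu)\le5$; one first establishes linear recurrences, in the style of Lemma~\ref{lem:recursions.mi.mii.miii}, for the families $m_i$, $m_{i,i}$, $m_{i,i,i}$, $m_{i,i,i,i}$ evaluated at $L_1,\dots,L_5$, whose characteristic roots are products of contents of partitions of $5$ (so, products of integers from $\{1,2,3,4\}$). Solving these produces closed forms for $\langle w_\lambda,m_\mu\rangle$ in terms of the parts of $\mu$, and product reductions analogous to Lemma~\ref{lem:mon.prod.redn} then cover arbitrary $\mu$. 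Using the parity Lemma~\ref{lem:chequerboard} one block-diagonalises the transition matrix into an even and an odd block, discards every monomial all of whose class-sum coefficients are even, and is left with finitely many families of candidates parametrised by their repeated-part sizes. The requirement that a choice of one monomial per basis slot yield determinant $\pm1$ becomes, within each family, an exponential Diophantine equation or short system, which one resolves by the same congruence sieving that disposed of (\ref{Dio1}) and (\ref{Dio2}); the claim is that the only surviving solutions are the twelve listed sets. For (3) one repeats this over $\Z[\xi]$: for each of the eight bases and two infinite families of Theorem~\ref{thm:S4bases} one re-derives the closed forms (now carrying $\xi$) for the corresponding monomials in $\H_4$ and asks for which parameter values the transition determinant to the class-element basis $\{\Gamma_\lambda\}$ is a unit of $\Z[\xi]$, i.e.\ equals $\pm1$; the paper already checks $i<50$ in the infinite families, and completing the argument amounts to applying the congruence method to the $\xi$-coefficients of these determinants.

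For (2) the missing ingredient is a uniform growth estimate. The goal is to show that for each $n$ there is a finite set $R_n$ of monomials such that, whenever the repeated-part sizes of the monomials in a candidate basis leave $R_n$, the determinant of the transition matrix to the class-sum basis is forced to have absolute value strictly greater than $1$. Concretely one bounds below the dominant term of each determinant expression --- the contribution of the largest characteristic root, which is exponentially large in the largest part of the monomials --- against the sum of the remaining terms, which is exactly what was done ad hoc in the $S_4$ families (``the left-hand side is easily seen to be larger than $12$''). I expect this to be the main obstacle: the number of characteristic roots, hence of competing terms, grows with $n$, so the ``leading term dominates'' heuristic must be made rigorous uniformly, presumably through a careful description of the spectrum of the Jucys--Murphy action together with an induction on the number of free parameters.

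Conjecture (4) should then follow by combining (2) with a lifting obstruction. Granting a finite explicit list $\mathcal B_n$ of monomial bases for $Z(\Z S_n)$ --- the output of the argument for (2) --- one must show that for no $B\in\mathcal B_n$ with $n\ge5$ do the corresponding Jucys--Murphy monomials in $\H_n$ form a $\Z[\xi]$-basis of $Z(\H_n)$. Since the units of $\Z[\xi]$ are $\pm1$, it is enough to produce, for each such $B$, a specialisation $\xi\mapsto\xi_0$ (or a prime) at which the transition matrix to the class-element basis becomes singular; the natural route is to track the $\xi$-linear corrections distinguishing $\Gamma_\lambda$ from the class sum $\underline{C_\lambda}$ and show they can never conspire to keep the determinant a nonzero constant. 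The $n=5$ computations reported in the paper already verify this for the twelve $\Z S_5$-bases, so the real content here, as for (2), is the uniform-in-$n$ statement, and I would expect it to rest on the same growth phenomenon rather than on new algebra.
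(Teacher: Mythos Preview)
These four statements are explicitly labelled \emph{Conjectures} in the paper; there is no proof in the paper to compare your proposal against. The paper's support consists only of the computational evidence summarised in Section~\ref{sec:H4+conjs}: a GAP search through partitions of $10$ finding exactly twelve monomial bases for $Z(\Z S_5)$, the check that none of those twelve lifts to $Z(\H_5)$, and the check of the two infinite $S_4$-families for $i<50$ finding three $\H_4$-bases. So your write-up is not a proof that can be judged correct or incorrect --- it is a research outline for attacking open problems.

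As a strategy it is reasonable in spirit for the finitary items (1) and (3): extending the closed-form and Diophantine machinery of Sections~\ref{sec:cl.forms.S4}--\ref{sec:bases} is the obvious line of attack. But you should be clear-eyed about the obstacles you have glossed over. For $S_5$ the monomials are indexed by partitions with up to four parts, so the ``families'' are parametrised by up to four independent exponents rather than two; the resulting Diophantine equations have many more variables and many more exponential terms, and the ad hoc congruence sieving that worked for \eqref{Dio1} and \eqref{Dio2} has no a priori reason to terminate. For (3), your plan to ``apply the congruence method to the $\xi$-coefficients'' is vague: the transition determinant in $\H_4$ is a polynomial in $\xi$, and you would need closed forms for every coefficient of that polynomial along the infinite family $m_{i,i,2}$, which the paper does not derive. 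For (2) and (4) you correctly identify that a genuinely new uniform growth estimate is needed and that this is ``the main obstacle''; that is an honest assessment, but it means what you have written for those items is a wish, not an argument.
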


\end{document}